\newtheorem{thm}{Theorem}[section]
\newtheorem{clly}{Corollary}[section]
\newtheorem{lema}{Lemma}[section]
\newtheorem{prop}{Proposition}[section]
\theoremstyle{definition}
\newtheorem{defi}{Definition}[section]
\newtheorem{rk}{Remark}[section]
\newcommand{\N}{\mbox{$\mathbb{N}$}}
\newcommand{\T}{\mbox{$\mathbb{T}$}}
\newcommand{\Z}{\mbox{$Z\!\!\!Z$}}
\newcommand{\R}{\mbox{$I\!\!R$}}
\numberwithin{equation}{section}
\begin{document}

\title[ Robust transitivity of singular endomorphisms ]{ A persistently singular map of $\T^n$ that is $C^1$ robustly transitive.}

\author[Juan C. Morelli]{}

\subjclass{Primary: 37C20; Secondary: 57R45, 57N16.}
 \keywords{Transitivity, singularity, stability, robustness, high dimension.}

\email{jmorelli@fing.edu.uy }

\maketitle

\centerline{\scshape  Juan Carlos Morelli$^*$}
\medskip
{\footnotesize
 \centerline{Universidad de La Rep\'ublica. Facultad de Ingenieria. IMERL.}
   \centerline{ Julio Herrera y Reissig 565. C.P. 11300.}
   \centerline{ Montevideo, Uruguay.}}

\bigskip

 \centerline{(Communicated by )}

\begin{abstract}

 We exhibit a $C^1$ robustly transitive endomorphism displaying critical points on the $n$-dimensional torus.

\end{abstract}

\section{Introduction}

Whenever we think about dynamical systems' properties almost inevitably come to mind the concepts of {\textit{stability}} and \textit{robustness}. Loosely speaking, we can say that stability implies same dynamics for maps sufficiently close to each other, and robustness implies the same behavior relative to a specifical property for maps sufficiently close to each other.\\

   This work in particular is focused in the study of robust transitivity, meaning by \textit{transitive} the existence of a forward dense orbit of a point. This may seem at first sight as an unexciting topic since a fair amount of results concerning robust transitivity are known. Nonetheless, the aimed class of maps, the singular endomorphisms about which little to nothing is known; as well as taking on the high dimensional context are undoubtedly a fresh approach to the subject.\\

   To set ideas in order we list up the most relevant known results about the topic.\\
   We begin summing up the most studied case: robust transitivity of diffeomorphisms. The image provided by known results is fairly complete. In the setting of surfaces, it is shown in \cite{m2} that robust transitivity implies the diffeomorphism to be Anosov and that the only surface that supports them is $\T^2$. Later on, in the arbitrary dimensional setting it is proven in \cite{bdp} that robust transitivity implies a dominated splitting on the tangent spaces (i.e. weak hyperbolicity). \\ Going further, next comes robust transitivity of regular endomorphisms (not globally but locally invertible). The image we have about these is somewhat less complete; yet we know that volume expanding is a necessary but not sufficient condition for $C^1$ robust transitivity according to \cite{lp}, where they also give a sufficient condition for the case of manifold $\T^n$.\\ Carrying on, at last there is the least studied case, robust transitivity of singular maps (non empty critical set). Until 2013 nothing had ever been written on the topic. It was on that year when \cite{br} showed the first example of a $C^1$ transitive singular map of $\T^2$. The second example was given only in 2016 by \cite{ilp}, they exhibit a $C^1$ robustly transitive map of $\T^2$ with a persistent critical set. Nothing more than these two examples was known until that time. Even so, there have been recent further advances on robust transitivity of singular surface endomorphisms: in 2018 \cite{ip} presented an example on $\T^2$ whose robust transitivity depends on the class of differentiabilty, and in 2019 \cite{lr1} and \cite{lr2} set the \textit{state of the art} proving that partial hyperbolicity is a necesary condition, that the only surfaces that support them are $\T^2$ and the Klein bottle, and that they belong to the homotopy class of a linear map with an eigenvalue of modulus larger than one. Finally, about singular endomorphisms in high dimensions, the only known result was given by \cite{mo} where he extended the result appearing in \cite{ip} to $\T^n$. \\

   In the spirit of generalizing known results in low to higher dimensions, the survey contained in our paper shows that the example exhibited in \cite{ilp} can also be extended to $\T^n$, resulting in the first known example of a persistently singular endomorphism\footnote{By persistently singular endomorphism we mean a map $f$ satisfying that there exists a $C^1$ neighborhood $\mathcal{U}_f$ of $f$ such that every map $g$ belonging to $\mathcal{U}_f$ displays critical points.} that is robustly transitive in the $C^1$ topology and supported on a manifold of dimension larger than $2$.\\
    The main result can be stated as:

   \begin{thm}\label{main}
     Given $n \geq 2$, there exists a persistently singular endomorphism supported on $\T^n$ that is $C^1$ robustly transitive.
   \end{thm}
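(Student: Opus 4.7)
The plan is to extend the $\T^2$ construction of \cite{ilp} to $\T^n$ via a skew-product strategy, mirroring the way \cite{mo} extended the example of \cite{ip}. Let $f_0:\T^2\to\T^2$ be the ILP map: $C^1$ robustly transitive, with a persistent critical curve $\mathcal{C}\subset\T^2$ and a dominated splitting $T\T^2 = E^c\oplus E^u$ in which $E^c$ collapses along $\mathcal{C}$. Writing $\T^n = \T^2\times\T^{n-2}$, I would set
\[
  F_0(x,y) = \bigl(f_0(x),\, E_\lambda(y)\bigr),
\]
where $E_\lambda$ is a linear expanding endomorphism of $\T^{n-2}$ with all eigenvalues of modulus $\lambda$. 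Choosing $\lambda$ larger than every expansion rate of $f_0$ produces a dominated splitting $T\T^n = E^c\oplus E^u\oplus E^{uu}$ for $F_0$, with $E^{uu}$ tangent to the $\T^{n-2}$-factor; standard theory gives that this splitting extends to every $G$ in a $C^1$-neighborhood $\mathcal{U}$ of $F_0$.

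Persistent singularity is immediate: since $\det DF_0(x,y) = \det Df_0(x)\cdot\det E_\lambda$ with $\det E_\lambda\neq 0$, one has $\mathrm{Crit}(F_0) = \mathcal{C}\times\T^{n-2}$. In \cite{ilp}, $Df_0$ drops rank transversally along $\mathcal{C}$, which is an open condition in the $C^1$ topology; therefore every $G\in\mathcal{U}$ has a nonempty critical set, which is moreover a codimension-one submanifold close to $\mathcal{C}\times\T^{n-2}$.

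The core step is robust transitivity. The map $F_0$ itself is transitive because $E_\lambda$ is topologically mixing on $\T^{n-2}$ and $f_0$ is transitive on $\T^2$, so the product is transitive. For perturbations the product structure is lost, but the dominated splitting persists and $E^{uu}$ integrates to a $G$-invariant foliation $\mathcal{F}^{uu}_G$ whose leaves are uniformly expanded. I would then adapt the argument of \cite{ilp} -- which shows that for any open $U\subset\T^2$ some forward iterate under a perturbation of $f_0$ contains an unstable arc crossing the critical curve -- to the $n$-dimensional setting: the extra expansion along $E^{uu}$, combined with the ILP mechanism along the $\T^2$-direction, should spread such iterates across entire leaves of $\mathcal{F}^{uu}_G$, and robust density of those leaves would yield transitivity of $G$.

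I expect the main obstacle to be exactly this transfer of the \cite{ilp} transitivity mechanism to $\T^n$ under arbitrary $C^1$ perturbations. In \cite{ilp}, the topological obstructions that force unstable leaves to cross the critical curve densely depend on subtle properties of the homotopy class of $f_0$ and of the geometry of $\mathcal{C}$. Verifying that these obstructions survive both the enlargement to a skew product with $E_\lambda$ and generic $C^1$ perturbations that mix the $\T^2$ and $\T^{n-2}$ directions is the main technical difficulty, and will likely require a careful robust accessibility argument tailored to the presence of critical points.
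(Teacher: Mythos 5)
Your skew-product strategy $F_0 = f_0 \times E_\lambda$ on $\T^2 \times \T^{n-2}$ is a genuinely different route from the paper's. The paper decomposes $\T^n = \T^{n-1}\times S^1$, expands strongly in the $\T^{n-1}$ factor via a diagonal integer matrix, and drives a $C^1$ robustly minimal iterated function system $\{g_1,g_2\}$ of circle diffeomorphisms on the $S^1$ factor over two overlapping cubes $K_0,K_1\subset\T^{n-1}$, producing a blender natively in dimension $n$ (Sections 2--3). Critical points are then introduced by explicit bump-function surgery in a ball disjoint from the blending region, and persistence is verified simply by exhibiting two points where $\det DF$ takes opposite signs (Lemma \ref{Fps}). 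The benefit of that design is that the transitivity mechanism and the critical set occupy disjoint, explicitly controlled regions of $\T^n$, so each property can be made robust locally.

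The gap in your proposal is the robust transitivity step, which you yourself flag. You propose to deduce transitivity of a perturbation $G$ from the persistence of the splitting together with \emph{robust density of the leaves of} $\mathcal{F}^{uu}_G$. But that density fails already for $G=F_0$: the strong unstable leaves of $f_0\times E_\lambda$ are exactly the fibers $\{x\}\times\T^{n-2}$, which are compact and nowhere dense in $\T^n$. Transitivity of $F_0$ itself comes from the interplay between a transitive base and a mixing fiber map, not from density of $E^{uu}$-leaves, and that product argument is not $C^1$ robust: after a generic perturbation the map no longer fibers over $\T^2$, so ``the ILP mechanism along the $\T^2$-direction'' has no invariant meaning, and the blender of $f_0$ does not automatically become a blender of the full $n$-dimensional system forcing the robust intersections of stable and unstable sets that you need. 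Supplying a mechanism that forces such intersections robustly in $\T^n$ is precisely what the paper does by building the blending region, the unstable cone field, and the normally hyperbolic circles $N, N'$ directly in dimension $n$ (Lemmas \ref{SRM}, \ref{conosf}, \ref{gestirapatra}, \ref{umdg}, \ref{smdg}), rather than attempting to transport them from a two-dimensional factor.
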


    \subsection{Sketch of the Construction.}     Start from an endomorphism induced by a diagonal expanding matrix with integer coefficients, with all but one directions strongly unstable and one central direction. Perturb the map to add a blending region that mixes everything getting the transitivity, and then introduce artificially the critical points preserving the transitivity property. All this construction is done in a robust way.\\

    The author wants to remind the readers that the contents to follow are an adaptation of the surfaces' construction exhibited by \cite[Section 2.2]{ilp} to arbitrary dimensions. The proofs to some of the claims in our Lemmas and Theorems are, consequently, also inspired by \cite{ilp}. Moreover, many of them can be adapted in a straightforward manner cleverly enough, but in the sake of a self contained article all proofs will be explicitly provided here. Finally, if the readers wish to get a hollower approach to our construction by overviewing the low dimensional context first, they are gently invited to get in touch with the cited article.

\section{Preliminaries}

Some basic definitions are recalled at the beginning. If the readers wish to get more insight on geometrical or dynamical background they can refer themselves to \cite{gg} or \cite{kh}.\\

Let $M$ be a differentiable manifold of dimension $m$ and $f:M \rightarrow M$ a differentiable endomorphism. The \textbf{orbit} of $x \in M$ is $\mathcal{O}(x)=\{ f^n(x) , n \in \N \}$. The map $f$ is \textbf{transitive} if there exists a point $x \in M$ such that $ \overline{{\mathcal{O}(x)}}=M$ and $f$ is $C^k$-\textbf{robustly transitive} if there exists a neighborhood $\mathcal{U}_f$ of $f$ in the $C^k$ topology such that $g$ is transitive for all $ g$ belonging to $\mathcal{U}_f$.

The proposition ahead is well known and of most practical use.

\begin{prop}\label{equi}
  If $f$ is continuous then are equivalent:
  \begin{enumerate}
    \item  $f$ is transitive.
    \item  For all $U, V$ open sets in $M$, exists $n \in \N$ such that $ f^n(U) \cap V \neq \emptyset$.
    \item  There exists a residual set $R$ (countable intersection of open and dense sets) such that for all points $  x \in R: \overline{\mathcal{O}(x)}=M$.
  \end{enumerate}
\end{prop}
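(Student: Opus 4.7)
The statement is the classical characterization of topological transitivity; my plan is to prove the chain (3) $\Rightarrow$ (1) $\Rightarrow$ (2) $\Rightarrow$ (3), using that the underlying manifold $M$ is second countable and a Baire space.

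The implication (3) $\Rightarrow$ (1) is immediate, since any point of the (necessarily non-empty) residual set $R$ already witnesses transitivity. For (1) $\Rightarrow$ (2), I would fix open sets $U, V$ and a point $x$ whose orbit is dense. First, locate some $k \in \N$ with $f^k(x) \in U$ by density. Then I claim there exists $m > k$ with $f^m(x) \in V$: indeed, the set $V \setminus \{f^0(x), \ldots, f^k(x)\}$ is still a non-empty open subset of $M$ (any non-empty open set in a positive-dimensional manifold is uncountable), so the dense orbit must meet it at some iterate $f^m(x)$, forcing $m > k$. Then $f^{m-k}(U) \cap V \neq \emptyset$, as required.

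The main step, and the only one requiring a genuine idea, is (2) $\Rightarrow$ (3), which is a standard Baire category argument. I would fix a countable basis $\{V_k\}_{k \in \N}$ for the topology of $M$ and consider the open sets $W_k := \bigcup_{n \in \N} f^{-n}(V_k)$, which are open by continuity of $f$. Each $W_k$ is dense by hypothesis: for any non-empty open $U$, condition (2) yields $n$ with $f^n(U) \cap V_k \neq \emptyset$, which rephrases as $U \cap f^{-n}(V_k) \neq \emptyset$. Since $M$ is a Baire space, $R := \bigcap_k W_k$ is residual, and any point $x \in R$ has an orbit meeting every basic open set, hence dense in $M$.

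The only inputs from the ambient space are that $M$ be second countable (to provide $\{V_k\}$) and Baire (to conclude $R$ is residual and non-empty); both hold for smooth manifolds, so no serious obstacle arises. The entire argument is topological and exploits no differentiable structure or property specific to the endomorphism beyond continuity, which is why this proposition can be quoted later without the standing hypotheses of the paper interfering.
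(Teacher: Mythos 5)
The paper does not actually prove Proposition \ref{equi}; it simply records it as ``well known and of most practical use'' and moves on, so there is no in-text argument to compare yours against. Your proof is the standard Baire-category proof and it is correct. One remark worth flagging: the implication $(1)\Rightarrow(2)$ genuinely uses that $M$ has no isolated points (you invoke this when deleting the finite set $\{f^0(x),\dots,f^k(x)\}$ from $V$); without that hypothesis the implication fails (e.g.\ the shift $n\mapsto n+1$ on $\mathbb{N}$ has a dense forward orbit but does not satisfy $(2)$). Since here $M=\T^n$ with $n\geq 2$, this is harmless, but it is good that you made the dependence explicit, because it is exactly the kind of hypothesis one can forget when quoting this equivalence in a more general topological setting. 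The other two implications are handled exactly as one should: $(3)\Rightarrow(1)$ by non-emptiness of residual sets in a Baire space, and $(2)\Rightarrow(3)$ by intersecting the open dense sets $W_k=\bigcup_n f^{-n}(V_k)$ over a countable base.
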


\subsection{Normally Hyperbolic (sub)manifolds}

We continue defining normally hyperbolic submanifolds in the sense of \cite{bb}. These kind of submanifolds for a given map are persistently invariant under perturbation, this allows defining dynamical systems within them. This will be the main usage we will make of them ahead in the paper. Their formal definition is as follows.

Let $f:M \to M$ be a $C^1$ diffeomorphism, $N \subset M$ a $C^1$ closed submanifold such that $f(N)=N$ (we say that $N$ is \textit{invariant}).
\begin{defi}
  We say that $f$ is \textit{Normally Hyperbolic at $N$} if there exists a splitting of the the tangent bundle of $M$ over $N $ into three $Df$-invariant subbundles such that $TM_{|N} = E^s \oplus E^u \oplus TN$ and a constant $0 < \lambda < 1$ such that for all $x \in \mathbb{N}$ the following hold:
  \begin{itemize}
    \item $||D_xf_{|E^s_x}|| < \lambda$, $||(D_xf)^{-1}_{|E^u_x}|| < \lambda$,
    \item $||D_xf_{|E^s_x}||.||(D_{f(x)}f)^{-1}_{|T_{f(x)}N}|| < \lambda$,
    \item $||(D_xf)^{-1}_{|E^u_x}||.||(D_{f^{-1}(x)}f)_{|T_{f^{-1}(x)}N}|| < \lambda$.
  \end{itemize}
 \end{defi}

 The first condition implies that the behavior of the differential map $Df$ is hyperbolic while the other two describe the domination property relative to stable $E^s$ and unstable $E^u$ subspaces. Our interest in these submaniolds comes from \cite[Theorem 2.1]{bb} which states that:
  \begin{thm}\label{NHSP}
     Given $M,N$ and $f$ as in the definition above, there exists $\mathcal{U}_f$ a $C^1$ neighborhood of $f$ such that all $g \in  \mathcal{U}_f$ admit a $C^1$ invariant submanifold $N_g$ which is unique such that $g$ is normally hyperbolic at $N_g$. Moreover, $N$ and $N_g$ are diffeomorphic and there exists an embedding from $N$ to $N_g$ which is $C^1$ close to the canonical inclusion $i:N \to M$.
  \end{thm}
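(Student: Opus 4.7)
The plan is to adapt the classical graph-transform strategy of Hirsch-Pugh-Shub: parametrize candidate invariant submanifolds near $N$ as graphs of small sections of its normal bundle, turn the invariance equation $g(N_g)=N_g$ into a fixed-point equation for a graph-transform operator $\Gamma_g$, and use the three normal hyperbolicity inequalities to show that $\Gamma_g$ is a contraction on a suitable space of sections. First I would fix a $C^1$ tubular neighborhood $\pi:\mathcal{W}\to N$ modelled on the normal bundle $E=E^s\oplus E^u$, and denote by $\Sigma$ the complete metric space of continuous sections $\sigma:N\to E$ with $C^0$-norm below some small $\delta>0$.

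Next I would shrink $\mathcal{U}_f$ so that for every $g\in\mathcal{U}_f$ and every $\sigma\in\Sigma$ the image $g(\mathrm{graph}(\sigma))$ remains inside $\mathcal{W}$ and stays transverse to the fibers of $\pi$; then $\pi|_{g(\mathrm{graph}(\sigma))}$ is a diffeomorphism onto $N$, and the image can be rewritten as the graph of a new section $\Gamma_g(\sigma)\in\Sigma$. The bound $\|Dg_{|E^s}\|<\lambda$ gives contraction in the stable direction, and the second and third domination inequalities force the tangential pull-back to dominate the normal stretching in $E^u$; since all these estimates are $C^1$-open, they survive on $\mathcal{U}_f$ and yield a uniform Lipschitz constant $\lambda'<1$ for $\Gamma_g$ in the $C^0$ metric. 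Banach's theorem then produces a unique continuous fixed section $\sigma_g$, and $N_g:=\mathrm{graph}(\sigma_g)$ is the desired continuous invariant graph, automatically diffeomorphic to $N$ and $C^0$-close to it through the embedding $x\mapsto x+\sigma_g(x)$.

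The next step is to promote $N_g$ to a $C^1$ submanifold and to transport the normal hyperbolicity onto it. For this I would use the fiber contraction theorem of Hirsch-Pugh: over the base $\Sigma$ consider the bundle whose fiber at $\sigma$ is the space of continuous candidate tangent planes of rank $\dim N$ along $\mathrm{graph}(\sigma)$, and let $Dg$ induce a natural fiber map projecting onto $\Gamma_g$. Because dominated splittings persist under $C^1$ perturbations, this fiber map is a contraction on each fiber. The fiber contraction theorem then delivers a unique continuous invariant tangent sub-bundle along $N_g$, which by a standard argument coincides with the actual tangent bundle of $N_g$, producing both the $C^1$ regularity and a $Dg$-invariant splitting $TM|_{N_g}=\tilde{E}^s\oplus\tilde{E}^u\oplus TN_g$ with hyperbolicity and domination constants slightly worse than $\lambda$. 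Repeating the argument in $C^1$-norm, or bootstrapping from the $C^0$ graph via the same fiber contraction, yields the advertised $C^1$-proximity of the embedding $N\to N_g$ to the canonical inclusion.

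The main obstacle, and the reason the statement needs the two domination inequalities in addition to plain hyperbolicity, lies in this last step: the continuous fixed section is produced painlessly by Banach's theorem, but the tangent planes to $g^n(\mathrm{graph}(\sigma))$ rotate under the full derivative $Dg$, and only the domination of $TN$ over $E^s$ and of $E^u$ over $TN$ keeps this rotation under contractive control. Without both inequalities one can still produce a continuous invariant set but cannot guarantee its $C^1$ regularity or the smooth persistence of the dominated splitting, which is essential for the downstream perturbation arguments in the paper.
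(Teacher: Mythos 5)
Your sketch is a reasonable outline of the classical Hirsch--Pugh--Shub graph-transform proof, and it touches all claims in the statement: existence and uniqueness of $N_g$ via a contraction on a space of sections of the normal bundle, $C^1$ regularity and invariant splitting via the fiber contraction theorem, and $C^1$-proximity of the embedding to the inclusion. However, the paper does not prove this theorem at all; it is quoted verbatim as \cite[Theorem 2.1]{bb} and used as a black box. The cited source, Berger and Bounemoura's \emph{A geometrical proof of the persistence of normally hyperbolic submanifolds}, was written precisely to offer an alternative to the graph-transform machinery you reproduce: it replaces the functional-analytic contraction on section spaces by a direct geometric construction driven by cone-field invariance and the domination inequalities, which the authors argue is more transparent and scales better to low regularity. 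So your proof is legitimate but follows the older analytic route, whereas the source the paper leans on is deliberately a different one; since only the conclusion (persistence, uniqueness, $C^1$-closeness of $N_g$) is used downstream, either proof serves. One point worth tightening in your sketch: transversality of $g(\mathrm{graph}(\sigma))$ to the fibers of $\pi$ gives only a local diffeomorphism under projection; to conclude that the image is a global graph over $N$, so that $\Gamma_g$ is well-defined on all of $\Sigma$, you already need the expansion estimate on $E^u$ and the domination $\|(D_xf)^{-1}_{|E^u_x}\|\cdot\|(D_{f^{-1}(x)}f)_{|TN}\|<\lambda$ at the $C^0$ stage, not only in the later $C^1$ bootstrapping step.
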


 \subsection{Blenders.}

 A brief overview of the concept of a \textit{blender} is given now. In most situations it is easy to think of blenders as higher dimensional horseshoes, or as sets exhibiting the dynamics of a Smale's horseshoe. Blenders force the robust intersection of topologically 'thin' sets, giving rise to rich dynamics.\\ According to \cite{bcdw}, \begin{displayquote}{\textit{''A blender is a compact hyperbolic set whose unstable set has dimension strictly less than one would predict by looking at its intersection with families of submanifolds''.}}\end{displayquote}

 They also provide with a prototipical example of a blender: Let $R$ be a rectangle with two rectangles $R_1$ and $R_2$ lying inside, horizontally, and such that their projections onto the base of $R$ overlap (Figure \ref{protoblender}). Consider now a diffeomorphism $f$ such that $f(R_1)=f(R_2)=R$. Then, $\Omega= \bigcap_{n \in \N}f^{-n}(R)$ gives rise to a blender (Cantor) set for $f$. Observe that $f$ admits a fixed point inside each of $R_1$ and $R_2$, and that all vertical segments between the projection of these points intersect $\Omega$ (this is due to the overlapping of the projections of $R_1$ and $R_2$ which holds at every preiteration). Observe as well that this construction is robust in two senses: on the one hand, $f$ can be slightly perturbed with persistance of the property. And on the other, the vertical segment can also be slightly perturbed and still intersect $\Omega$.

 \begin{figure}[ht]
\begin{center}
\includegraphics[scale=0.45]{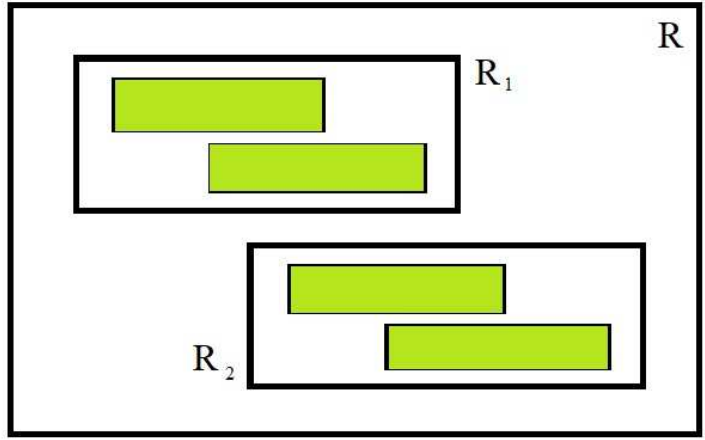}
\caption{A protoblender over $R$. Darker is $f^{-1}(R)$.}\label{protoblender}
\end{center}
\end{figure}

Notice that $\Omega$ is a fractal object with topological dimension zero. Nonetheless, every close-to-vertical line in between the fixed points of $f$ inside $R_1 \cup R_2$ intersects $\Omega$; hence, one would expect $\Omega$ to be at least of topological dimension one. This is the characteristical trait of blender sets.\\
To finish with the preliminaries regarding blenders, their importance lies in the fact that they are a magnificent tool for producing rich dynamics, particularly robustly transitive dynamics. For more insight on blenders and its applications the reader may go to \cite{bd}.

\subsection{Iterated Function Systems.}

Let $\mathcal{F},\mathcal{G}$ be two families of diffeomorphisms of $M$. Denote by $\mathcal{F} \circ \mathcal{G}:= \{f \circ g  / \quad f \in \mathcal{F}, g \in \mathcal{G}\}$; and for $k \in \N$ denote $\mathcal{F}^0=\{ Id_M\}$ and $\mathcal{F}^{k+1}=\mathcal{F}^{k} \circ \mathcal{F}$. Then, the set $\bigcup_{k=0}^{\infty}\mathcal{F}^k$ has a semigroup structure that is denoted by $\langle \mathcal{F}\rangle^+$ and said to be generated by $\mathcal{F}$. The action of the semigroup $\langle \mathcal{F}\rangle^+$ on $M$ is called the \textbf{iterated function system} associated with $\mathcal{F}$. We denote it by IFS$(\mathcal{F})$. For $x \in M$, the \textbf{orbit} of $x$ by the action of the semigroup $\langle \mathcal{F}\rangle^+$ is $ \langle \mathcal{F}\rangle^+(x)=\{f(x), f \in \langle \mathcal{F}\rangle^+ \}$. A sequence $\{ x_n, \quad n \in \N \}$ is a branch of an orbit of IFS$(\mathcal{F})$ if for every $n \in \N$ there exists $f_n \in \langle \mathcal{F}\rangle^+$ such that $f_n(x_n)=x_{n+1}$.
\begin{defi}
  An IFS$(\mathcal{F})$ is \textbf{minimal} if for every $x \in M$ the orbit $ \langle \mathcal{F}\rangle^+(x)$ has a branch that is dense on $M$. \\
  An IFS$(\mathcal{F})$ is \textbf{$C^r$ robustly minimal} if for every family $\mathcal{\hat{F}}$ of $C^r$ perturbations of $\mathcal{F}$ and every $x \in M$ the orbit $\langle \mathcal{\hat{F}}\rangle^+(x)$ has a branch that is dense on $M$.
\end{defi}

As is shown in \cite{hn}, every boundaryless compact manifold admits a pair of diffeomorphisms that generate a $C^1$ robustly minimal IFS. Ahead, we provide with a construction of such a pair of maps on $S^1$ with the additional properties of being mostly contracting and have a bounded $C^1$ distance to the identity. We consider for the rest of the article $S^1$ as the quotient of $[-1,1]$ under the identification $1\sim-1$.

\begin{lema}\label{SRM}
  Given $k>0$, there exists a family $\mathcal{F}=\{g_1,g_2\}$ in $Diff^1(S^1)$ such that $\max_{i \in \{1,2\}}\{ ||Id -g_i||\} <k$, $\max_{i \in \{1,2\}}\{ ||g'_i||\} <2$ and IFS$(\mathcal{F})$ is $C^1$ robustly minimal.
\end{lema}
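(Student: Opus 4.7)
I would prove the lemma by explicit construction, following the approach of \cite{hn}.

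First, choose $C^1$ functions $\phi_1, \phi_2 : S^1 \to \R$ with $||\phi_i||_\infty < k$ and $||\phi_i'||_\infty < 1$, and set $g_i(x) := x + \phi_i(x)$ for $i = 1, 2$. Since $|\phi_i'| < 1$, each $g_i$ is an orientation-preserving $C^1$-diffeomorphism of $S^1$ satisfying $||Id - g_i||_\infty < k$ and $||g_i'||_\infty < 2$, so the quantitative bounds hold for free. The functions $\phi_i$ would be shaped so that each $g_i$ is Morse--Smale with a single attracting fixed point $p_i$ and a single repelling fixed point $q_i$, with the four points pairwise distinct and cyclically arranged on $S^1$ in an interleaved pattern (the precise pattern being as in \cite{hn}, namely with the attractors and repellors separated so that no proper closed arc is bounded by fixed points of both $g_1$ and $g_2$).

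Next, I would prove minimality of IFS$(\mathcal{F})$: for every $x \in S^1$ and every nonempty open $U \subset S^1$, there exists $f \in \langle\mathcal{F}\rangle^+$ with $f(x) \in U$; then a diagonal argument over a countable basis of $S^1$ yields a dense branch of the orbit of $x$. The accessibility statement follows from the topological interleaving: no proper closed arc of $S^1$ can be simultaneously forward-invariant under both $g_1$ and $g_2$, since $g_i$-invariant arcs are bounded by fixed points of $g_i$ and the two fixed-point pairs are disjoint and interleaved. Given $x$ and $U$, one uses long blocks of a single map $g_i$ to push the orbit close to $p_i$, followed by applications of the other map to drift toward $p_j$; the blending-of-basins argument in \cite[\S 2]{hn} shows that by alternating blocks of appropriate lengths, the net displacements can be made arbitrarily small and arbitrary, so every open set is reached.

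For robustness, each hyperbolic fixed point persists under $C^1$ perturbation and the interleaving is a $C^0$-open condition, so every pair $(\hat g_1, \hat g_2)$ in a small $C^1$-neighborhood of $(g_1, g_2)$ retains the same configuration and the same minimality argument applies to $\langle\hat{\mathcal{F}}\rangle^+$. The main difficulty is the accessibility step of the minimality argument: quantitatively realizing arbitrarily small net displacements by alternating blocks of $g_1$ and $g_2$. One cannot use rotation-number techniques, which would be destroyed by $C^1$ perturbations; the argument of \cite{hn} instead uses the topological blending of basins, depending only on hyperbolicity and interleaving, so our norm bounds on the $\phi_i$'s pose no obstacle.
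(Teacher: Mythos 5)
Your construction follows the same outline as the paper's (two Morse--Smale generators $C^1$-close to the identity, structural stability of Morse--Smale maps for robustness, and the bound $\|g_i'\|<2$ from $\|\phi_i'\|<1$), but the crucial minimality step is argued along a genuinely different line, and your version has a gap. The paper's proof hinges on an explicit, quantitative \emph{covering condition}: the contraction zones $\{x:|g_1'(x)|<1\}$ and $\{x:|g_2'(x)|<1\}$ cover $S^1$, arranged by keeping the two short expansion intervals around the repellers disjoint; this feeds a direct pullback argument (any open arc has a strictly longer $g_1$- or $g_2$-preimage, so finitely many backward steps reach all of $S^1$), and the condition is $C^1$-open, which is what yields robustness.

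Your minimality argument instead rests on the assertion that, by interleaving, no proper closed arc is forward-invariant under both $g_1$ and $g_2$, ``since $g_i$-invariant arcs are bounded by fixed points of $g_i$.'' That sentence is false as stated: a closed arc is \emph{forward}-invariant under a Morse--Smale circle map as soon as both its endpoints move inward, and such endpoints are typically \emph{not} fixed points --- any arc containing the attractor $p_i$, avoiding the repeller $q_i$, and with endpoints in the basin of $p_i$ is forward-invariant. The obstruction this overlooks is concrete: if the cyclic arrangement of the four fixed points is $p_1,p_2,q_1,q_2$ (attractors adjacent), then the arc joining the two attractors and avoiding both repellers is forward-invariant under \emph{both} generators, since each contracts toward its own attractor inside it, and minimality fails outright. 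So ``interleaved with four distinct fixed points'' does not rule out a common invariant arc; you must pin down the cyclic pattern so that every arc bounded by one map's fixed points meets the other map's repeller, and even then the passage from ``no common invariant arc'' to density of a branch still requires the quantitative accessibility estimate, which you defer entirely to \cite{hn}. The covering condition together with the pullback-growth computation is the kind of explicit, $C^1$-open ingredient needed to close this gap, and it is the route the paper takes in place of the cited drift argument.
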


\begin{proof}.\\
  Let $a \in \left(0, \frac{2}{3} \right)$ and $g_a:[-1,1] \to [-1,1]$ a real function given by $$g_a(x)=\left\{ \begin{array}{c}
                                                                     \left( \frac{2-3a}{2-2a}\right)(x+1)-1, \mbox{\ \ \ \ \  if $x \in \left[-1, -a \right]$}\\
                                                                    \frac 3 2 x, \mbox{ \ \ \ \ \ \ \ \ \ \ \ \ \ \ \ \ \ \ \ \ \ \ \ \ \  if $x \in \left[ -a, a \right]$} \\
                                                                    \left( \frac{2-3a}{2-2a}\right)(x-1)+1, \mbox{ \ \ \ \ \ \ \ \ \  if $x \in \left[ a, 1 \right]$}
                                                                  \end{array}\right.$$
  Then $g_a$ is a continuous piece-wise linear function that descends to $S^1$ as shown in Figure \ref{IFSmap}.
  Fix $a_0 \in \left(0, \frac {1}{ 26} \right)$ so that $||x-g_{a_0}(x)||< \frac{k}{2}$ and define $g:=g_{a_0}$.
  Let $g_1:S^1 \to S^1$ be a smooth approximation of $g$ such that $||g_1||\leq \frac 3 2$ and $g_1$ is a contraction on the complement of $\left( -2a_0,  2a_0 \right)$. Afterwards let $g_2:S^1 \to S^1$ be such that $g_2(x)=g_1(x- \frac{2}{13})$. \\ We claim $\mathcal{F}=\{ g_1, g_2\}$ is a family satisfying the announced properties.\\ To show minimality it is only needed to see that given any point $p$ in $S^1$, the orbit $\langle \mathcal{F}\rangle^+(p)$ is dense in $S^1$. Define $A:=\{ x \in S^1/\quad g_1 \mbox{ is a contraction at $x$}\}$ and $B:=\{ x \in S^1/ \quad g_2 \mbox{ is a contraction at $x$}\}$. We have $A \cup B=S^1$. Let $W$ be an open set in $S^1$, then either $g_1^{-1}(W)$ or $g_2^{-1}(W)$ is larger than $W$, so $\langle \mathcal{F}\rangle^{-1}(W)$ is strictly larger than $W$. Keep taking preimages until finding $n$ such that $\langle \mathcal{F}\rangle^{-n}(W)=S^1$ so $\langle \mathcal{F}\rangle^n(p) \in W$. To show robustness, observe that both $g_1$ and $g_2$ are Morse-Smale diffeomorphisms. Since these are structurally stable the proof above is robust. \\
  The last two properties claimed at the thesis of the Lemma are straightforward from the construction.
\end{proof}

\begin{figure}[ht]
\begin{center}
\includegraphics[scale=0.275]{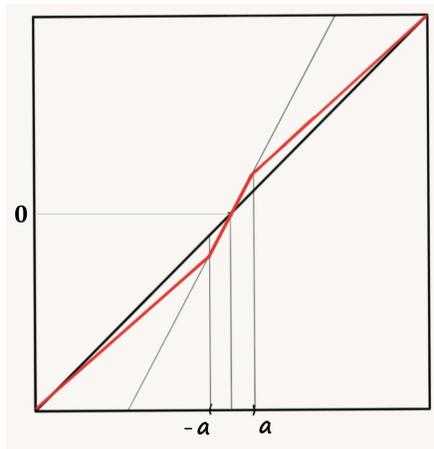}
\caption{$\tilde{g}_a:S^1 \to S^1$ is almost a contraction on $S^1$. }\label{IFSmap}
\end{center}
\end{figure}

Having stated all the preliminary facts needed to construct the example map satisfying the claim of Theorem \ref{main}, we proceed to it now in two steps. In Section 3 we define a $\T^n$ endomorphism (which we name $f$) that is $C^1$ robustly transitive. To achieve this goal, we use the result given by Lemma \ref{SRM} to create a blending region for $f$ supported on a strict subset $X$ of $\T^n$.\\
Once this construction is finished, we move on into Section 4 where the second step of the construction takes place by artificially introducing critical points inside the complement of $X$ in $\T^n$. The surgery is done in such a way that the critical point existence is robust and the blending region is unaffected, resulting in a new map (which we call $F$) that satisfies the claim at the thesis of Theorem \ref{main}.

\section{A regular endomorphism $f$ of $\T^n$.}

\subsection{Construction of $f$.}

 Consider the $n$ dimensional torus $\T^n=\R^n / [-1,1]^n$ and endow it with the standard riemannian (euclidean) metric. Let $\widehat{A} \in \mathcal{M}_n(\Z)$ be the diagonal matrix suggested below, with a unit in the last entry and all of the other elements being equal to $14$,

 \begin{equation}\label{matrizAgorro}
   \widehat{A}= \left(\begin{array}{ccccc}
 14 & 0 & 0 & \cdots & 0\\
0 & 14 & 0 & \cdots & 0 \\
\vdots & \vdots & \vdots & \ddots & \vdots \\
\vdots & \vdots & 0 & 14 & 0 \\
0 & \cdots & 0 &0 &1\\
\end{array}%
\right).
 \end{equation}

The matrix $\widehat{A}$ induces a regular endomorphism $A$ on the $n$-torus defined by
\begin{equation}\label{endoA}
  A:\T^n\rightarrow\T^n / \quad A(x_1,...,x_n)= (14x_1,14x_2,...,14x_{n-1},x_n).
\end{equation}

\begin{rk}.
  \begin{enumerate}
    \item  The construction could be carried on with any $\lambda \in \Z$ such that $|\lambda|>>1$; the choice of $14$ is made in the sake of simplicity and for a better understanding of the contents to follow.
    \item The construction can in fact be carried on with any $\lambda \in \Z$ such that $|\lambda|>1$ since there would be a power of $\widehat{A}$ such that the first entry would be larger than 14. It follows that the construction holds for any linear map in the isotopy class of maps with an eigenvalue of modulus larger than one.
    \item Observe that $A$ is a map $modulo$ $2$ even when we do not state it explicitly. The same convention applies for all maps of $\T^n$ defined along this work.
  \end{enumerate}
\end{rk}

For the rest of the construction, consider a decomposition of the $n$-torus given by $\T^n=\T^{n-1} \times S^1$; the map $A$ becomes $A:\T^{n-1} \times S^1 \rightarrow \T^n  / A(x,y)=(14x,y)$.\\

Define now the following cubes, subsets of the first factor $\T^{n-1}$: $K_0=\left[\frac{-1}{28},\frac{1}{28}\right]^{n-1}$, $K_1=\left[\frac{3}{28},\frac{5}{28}\right]^{n-1}$ and $K= \left(K_0 \cup K_1 \right)$. Take $\varepsilon=\frac{1}{1400}$ and define again cube sets $K_{0}^{\varepsilon}=\left[\frac{-1}{28}-\varepsilon,\frac{1}{28}+\varepsilon \right]^{n-1}$, $K_{1}^{\varepsilon}=\left[\frac{3}{28}-\varepsilon,\frac{5}{28}+\varepsilon \right]^{n-1}$ and $K^{\varepsilon}= \left(K_{0}^{\varepsilon} \cup K_{1}^{\varepsilon}\right) $.\\
Define next a smooth bump $\tilde{u}: \R \rightarrow \R$ given by Figure \ref{graficou} and let $u: \T^{n-1} \to \R$ be such that $u(x)=\frac{1}{n-1}\sum_{j=1}^{n-1} \tilde{u}(x_j)$. Observe that $u$ is smooth and satisfies $u_{|K}=1$ and $u_{|(K^\varepsilon)^c}=0$. Furthermore, $||\tilde{u}'||:=\max \{ |\tilde{u}'(x)|, x \in \R \} $ exists and $||\nabla u|| \leq ||\tilde{u}'||$.

\begin{figure}[ht]
\begin{center}
\includegraphics[scale=0.85]{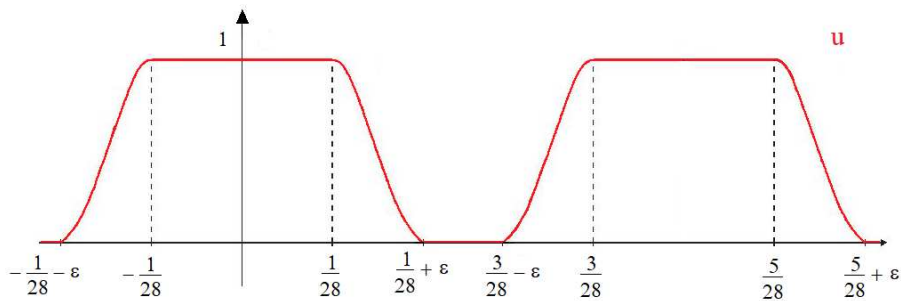}
\caption{Graph of $\tilde{u}$.}\label{graficou}
\end{center}
\end{figure}

Finally, fix a real number $0<\kappa <3$ and let $\mathcal{F}=\{g_1,g_2\}$ be the family given by Lemma \ref{SRM} for the second factor $S^1$, satisfying the properties claimed in the thesis of the theorem for $k= \frac{\kappa}{||\tilde{u}'||}$.\\

Define \begin{equation}\label{mapaefegorro}
         \hat{f}:K^\varepsilon \times S^1 \rightarrow \T^n  /  \hat{f}(x,y)=\left\{\begin{array}{c}
                                                            (14x , g_1(y)) \mbox{  if $x \in K_0^\varepsilon $} \\
                                                            (14x , g_2(y)) \mbox{  if $x \in K_1^\varepsilon $}
                                                          \end{array}\right.
       \end{equation}
and extend $\hat{f}$ to  \begin{equation}\label{mainmap}
                                f: \T^{n-1}  \times S^1 \rightarrow \T^n  /  f(x,y)=\left\{
                                \begin{array}{c}
                                  u(x).\hat{f}(x,y)+(1-u(x)).A(x,y) \mbox{ if $x \in K^{\varepsilon}$} \\
                                  A(x,y) \mbox{ if $x \notin K^{\varepsilon}$}
                                \end{array}\right.
                              \end{equation}

\begin{rk}\label{rkdinf}

 The following properties are straightforward to check:
\begin{enumerate}
  \item Calling $\hat{f}(x,y)=(14x,\hat{f}_2(y))$, then $f(x,y)=(14x,u(x).\hat{f}_2(y)+(1-u(x)).y)$.
   \item Since $\|Dg_1\| =\|Dg_2\| = \frac{3}{2}$ then $\|D\hat{f}_2\|<2$.
  \item By construction of $f$, $\| Id_{S^1} -\hat{f}_2\| \leq \frac{\kappa}{||u'||}$.
  \item The restriction $f_{|\left(K \times S^1\right)}=\hat{f}$.
  \item The restriction $f_{|{\left(K^\varepsilon \times S^1\right)}^c}=A$.
   \item $K \times S^1$ is a proto-blender for $f$ relative to $\left[ -\frac 1 2, \frac 1 2 \right]^{n-1} \times S^1$.

\end{enumerate}
\end{rk}

\subsection{Dynamics of $f$.}

 The most evident dynamical feature $f$ has is a strongly dominant expansion along the first factor $\mathbb{T}^{n-1}$. It follows that there exists a family of unstable cones for $f$ in the perpendicular direction to the last canonical vector $\vec{e_n}$, whereas $\vec{e_n}$ itself can be regarded as a central direction. We make a pause here to check the existence of such an unstable cone field for $f$.\\
Recall that for $x \in M$, we call  \textbf{cone} of parameter $a$, index $n-k$ and vertex $x$  to  $$C^{u}_{a} (x)=\left\{ (v_{1},...,v_{n}) \in T_x M / \frac{\Vert (v_{k+1},...,v_{n}) \Vert}{\Vert (v_{1}, v_2,...,v_k) \Vert}<a \right\}$$ and that $f$ admits an \textbf{unstable cone} of parameter $a$ and vertex $x \in M$ if there exists $C^u_a(x) \subset T_xM$ such that $\overline{D_x f(C^u_a(x)) }\setminus \{0\} \subset C^u_a(f(x))$.

\begin{lema}\label{conosf}
  The map $f$ defined by Equation (\ref{mainmap}) admits an unstable cone of parameter $\kappa$, index $1$ and vertex $(x,y)$ at every $(x,y) \in \T^n$.
\end{lema}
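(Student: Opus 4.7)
The plan is to compute the Jacobian of $f$ directly from the uniform formula $f(x,y) = (14x,\, u(x)\hat f_2(y) + (1-u(x))y)$ of Remark \ref{rkdinf}(1), which is valid on all of $\T^n$ since $u\equiv 0$ off $K^{\varepsilon}$. The resulting $Df_{(x,y)}$ is block lower-triangular: its $(n-1)\times(n-1)$ upper-left block is $14\, Id_{n-1}$, its upper-right block vanishes, its lower-left row is the row vector $(\hat f_2(y)-y)\,\nabla u(x)$, and its lower-right scalar entry is $u(x)\hat f_2'(y) + 1 - u(x)$. The strong factor-$14$ horizontal expansion, combined with small off-diagonal and near-isometric central entries, should straightforwardly force cone invariance.

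Given a nonzero $v = (v_h, v_c) \in \overline{C^u_\kappa(x,y)}$ with $v_h\in\R^{n-1}$, I would first note that $v_h\neq 0$ (otherwise $v_c=0$ too) and write $Df\cdot v = (14v_h,\, w)$. Then I would bound $|w|$ by a triangle inequality applied to its two summands. The ingredients are exactly those packaged into the construction: the calibration $k=\kappa/\|\tilde u'\|$ of Lemma \ref{SRM} together with item (3) of Remark \ref{rkdinf} gives $|\hat f_2(y)-y| \leq \kappa/\|\tilde u'\|$, item (2) gives $\|D\hat f_2\| < 2$, and the explicit formula $u(x)=\frac{1}{n-1}\sum_j \tilde u(x_j)$ gives $\|\nabla u\|\leq \|\tilde u'\|$ with $u\in[0,1]$. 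A short computation then yields $|w| \leq \kappa\|v_h\| + 2|v_c| \leq 3\kappa\|v_h\|$ on the cone, so $|w|/\|14v_h\| \leq 3\kappa/14 < \kappa$, placing $Df\cdot v$ strictly inside $C^u_\kappa(f(x,y))$ as required.

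I do not anticipate a genuine obstacle: the entire calibration $k=\kappa/\|\tilde u'\|$ was chosen precisely so that the cross-term $\|\nabla u\|\cdot|\hat f_2(y)-y|$ absorbs into a single factor of $\kappa$, while the strong $\times 14$ expansion absorbs the remaining $3\kappa$ with plenty of room. The only point worth checking carefully is uniformity over all of $\T^n$---in particular across the transition collar $K^{\varepsilon}\setminus K$ where $u$ interpolates nontrivially between $0$ and $1$ and where $\hat f_2$ may not even coincide with $g_1$ or $g_2$---but since every bound used depends only on the global quantities $\|u\|_\infty\leq 1$, $\|\nabla u\|\leq \|\tilde u'\|$, and $\|D\hat f_2\|<2$, the same estimate applies verbatim at every point, with a uniform margin independent of $(x,y)$.
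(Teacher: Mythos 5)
Your proposal is correct and follows essentially the same approach as the paper: compute the block lower-triangular Jacobian $D_{(x,y)}f$ from the uniform formula of Remark \ref{rkdinf}(1), apply the triangle inequality to the $\vec e_n$-component, and use the calibration $k=\kappa/\|\tilde u'\|$ together with $\|D\hat f_2\|<2$ and $u\in[0,1]$ to conclude the ratio is bounded by a constant multiple of $\kappa/14 < \kappa$. Your bound of the central diagonal entry by $u(x)\cdot 2 + (1-u(x))\cdot 1 \le 2$ is in fact marginally tighter than the paper's (which bounds $|u|\cdot\|D\hat f_2\|+|1-u|\cdot\|Id\|\le 4$), so you obtain $3\kappa/14$ rather than the paper's $5\kappa/14$, but both land strictly below $\kappa$ and the reasoning is the same.
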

\begin{proof}
The differential of $f$ at $(x,y)$ is given by $$
                                                D_{(x,y)}f=\left( \begin{array}{cc}
                                                   14 & 0 \\
                                                   \nabla u(x).(\hat{f}_2(y)-y) & u(x).D_y\hat{f}_2+(1-u(x)).y
                                                 \end{array}\right).
                                               $$ Then for all vectors $(v_1,v_2)$ of the tangent space of $\T^{n-1} \times S^1$ at $(x,y)$ it is  $$D_{(x,y)}f(v_1,v_2)=\left( \begin{array}{c}
                                                   14v_1 \\
                                                   \left[\nabla u(x).(\hat{f}_2(y)-y)\right]v_1 + \left[u(x).D_y\hat{f}_2+(1-u(x)).y\right]v_2
                                                 \end{array}\right).$$

Consider all vectors $(v_1,v_2)$ in $C^u_\kappa(x,y)$ and let $(w_1,w_2):=Df_{(x,y)}(v_1,v_2)$, we see that the cone is unstable by computing $$\frac{||w_2||}{||w_1||}=\frac{\left|\left|\left[\nabla u(x).(\hat{f}_2(y)-y)\right]v_1 + \left[u(x).D_y\hat{f}_2+(1-u(x)).y\right]v_2 \right|\right|}{14|v_1|}\leq $$ $$ \leq \frac{||\nabla u||.||\hat{f}_2(y)-y||}{14}+\frac{(|u(x)|.||D_y\hat{f}_2||+|1-u(x)|.||Id||).\kappa}{14} \leq \frac{\kappa}{14}+\frac{4\kappa}{14}<\kappa,  $$ \\ where in the first inequality we apply triangular and that $ (v_1,v_2) \in C^{u}_{\kappa} (x,y)$ and in the second inequality we use:
\begin{enumerate}
\item $||\nabla u||.||\hat{f}_2(y)-y|| \leq \kappa$ by Remark \ref{rkdinf},
\item $ ||D\hat{f}_2||<2$ by Remark \ref{rkdinf},
\item $ \| Id\| \leq 2 $,
\item $ \max_{x \in \R} \{|u(x)|,|1-u(x)|\}\leq 1$.

\end{enumerate}

\end{proof}

\begin{lema}\label{estiraf}
  For all $ v \in C^{u}_{\kappa} (x,y)$ holds that $ \Vert D_{x}f (v) \Vert > 4 \Vert v \Vert $.
\end{lema}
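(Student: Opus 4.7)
The plan is to exploit that the first factor of $Df$ is pure expansion by $14$, while the cone condition controls the size of $v$ in terms of its first-component norm. Concretely, for $v = (v_1, v_2) \in C^u_\kappa(x,y)$ I would write
$$\|v\|^2 = \|v_1\|^2 + \|v_2\|^2 < \|v_1\|^2 + \kappa^2 \|v_1\|^2 = (1+\kappa^2)\|v_1\|^2,$$
using the cone inequality $\|v_2\| < \kappa \|v_1\|$. Hence $\|v\| < \sqrt{1+\kappa^2}\,\|v_1\|$.

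Next I would use the explicit form of $D_{(x,y)}f$ computed in the proof of Lemma \ref{conosf}: its first block sends $v_1$ to $14 v_1$. Therefore, denoting $Df(v) = (w_1, w_2)$,
$$\|Df(v)\| \geq \|w_1\| = 14\,\|v_1\| > \frac{14}{\sqrt{1+\kappa^2}}\,\|v\|.$$

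Finally, plugging in the hypothesis $\kappa < 3$ gives $\sqrt{1+\kappa^2} < \sqrt{10}$, so that the ratio $14/\sqrt{1+\kappa^2}$ is strictly greater than $14/\sqrt{10} > 4$. This yields the desired inequality $\|Df(v)\| > 4\|v\|$.

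There is essentially no obstacle here: the lemma is a direct consequence of the cone parameter being bounded by $3$ together with the uniform expansion factor $14$ along the first factor. The only thing to be slightly careful about is noting that the off-diagonal terms of $Df$ do not appear in the bound because we only estimate $Df(v)$ from below by the norm of its first component, which involves only the $14\,v_1$ term; no interaction with the vertical direction (or with $u$, $\nabla u$, or $\hat f_2$) is needed.
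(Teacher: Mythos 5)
Your proof is correct and follows essentially the same route as the paper: both arguments bound $\Vert Df(v)\Vert$ from below by $14\Vert v_1\Vert$ (ignoring the off-diagonal block) and then use the cone inequality $\Vert v_2\Vert < \kappa\Vert v_1\Vert$ together with $\kappa < 3$ to convert this into a bound in terms of $\Vert v\Vert$. The paper presents the computation squared, in a single chain of inequalities culminating in $\tfrac{196}{160}>1$; you keep things unsquared and arrive at $14/\sqrt{10}>4$, which is the identical arithmetic.
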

\begin{proof}
Let $v=(v_{1},v_{2}) \in C^{u}_{\kappa} (x,y)$ and recall $0< \kappa <3$, then $$\left( \frac{\Vert D_{(x,y)}f (v_1,v_2) \Vert}{4.\Vert (v_1,v_2) \Vert}\right) ^{2} \geq \frac{(14.|v_{1}|)^{2}}{16.(|v_1|^2+||v_2||^2)}  \geq  \frac{196}{16\left(1+{\left(\frac{||v_2||}{|v_1|}\right)}^{2}\right)}> \frac{196 }{160} > 1. $$
\end{proof}

\begin{rk}\label{kappachico} Recall that if $B_k{(x,r)}$ denotes a ball of dimension $k$, the $k$-th. dimensional \textit{inradius} of a set $X$ is $ir_k(X):=\max_{x \in X}\{r>0/ \quad B_k{(x,r)} \subset X \}$.\\
  Since the definition of unstable cone is independent of the construction of $f$, $\kappa$ can be chosen small enough such that for all disks $\gamma$ satisfying $\gamma' \in C^u_{\kappa}(\gamma)$ at all times, then inradius and diameter of $\gamma$ can be identified. For the rest of the article assume $\kappa$ is small enough so that this property to hold.
\end{rk}

\begin{clly}\label{diametro}
  For all disks $\gamma$ such that for all $t$ where $\gamma$ is defined it holds that $\gamma'(t) \in C^u_{\kappa}(\gamma(t))$, the inradius satisfies $ir_k(f(\gamma))\geq 4ir_k(\gamma)$ for all $k \leq n-1$. 
\end{clly}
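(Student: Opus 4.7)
The plan is to combine the three preceding results—the cone invariance from Lemma \ref{conosf}, the expansion estimate from Lemma \ref{estiraf}, and the inradius/diameter identification from Remark \ref{kappachico}—and read off the conclusion with essentially no extra work.

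First I would verify that $f(\gamma)$ is itself a disk with tangent vectors inside the unstable cone. This follows because for each $t$ the tangent vector to $f\circ\gamma$ at $t$ equals $D_{\gamma(t)}f\bigl(\gamma'(t)\bigr)$, and Lemma \ref{conosf} guarantees $D_{\gamma(t)}f\bigl(C^u_\kappa(\gamma(t))\bigr)\subset C^u_\kappa(f(\gamma(t)))$. In particular, $f(\gamma)$ falls into the same class of disks to which Remark \ref{kappachico} applies, so its inradius is comparable to its diameter in the same way as that of $\gamma$.

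Next I would estimate the diameter of $f(\gamma)$. Given any two points $\gamma(t_0)$ and $\gamma(t_1)$ on the original disk, the image arc $f\circ\gamma$ restricted to $[t_0,t_1]$ has length
\[
\int_{t_0}^{t_1}\|D_{\gamma(t)}f(\gamma'(t))\|\,dt > 4\int_{t_0}^{t_1}\|\gamma'(t)\|\,dt
\]
by Lemma \ref{estiraf}, since $\gamma'(t)\in C^u_\kappa(\gamma(t))$. Taking an infimum over curves realizing the intrinsic distance inside the disks, the diameter of $f(\gamma)$ exceeds four times the diameter of $\gamma$.

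Finally I would invoke Remark \ref{kappachico} twice: the inradius of $\gamma$ is (up to the identification granted by the remark) its diameter, and likewise for $f(\gamma)$. Substituting in the diameter expansion estimate yields $ir_k(f(\gamma))\geq 4\,ir_k(\gamma)$ for every $k\leq n-1$, which is exactly the claim. There is no real obstacle here; the only subtlety is the legitimacy of passing from the pointwise stretching of tangent vectors to the inradius bound, and that is precisely what the remark was set up to allow.
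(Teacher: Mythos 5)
Your proof is correct and matches the argument the paper intends. The corollary carries no explicit proof because it is exactly the direct combination of Lemma \ref{conosf} (so $f(\gamma)$ is again a thin-cone disk), Lemma \ref{estiraf} (factor-$4$ stretching of cone vectors, hence of intrinsic lengths and diameters), and Remark \ref{kappachico} (identification of inradius with diameter for disks tangent to $C^u_\kappa$, applied to both $\gamma$ and $f(\gamma)$), which is precisely the three-step chain you assemble.
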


\begin{clly}\label{conorobusto}
  There exists a $C^1$ neighborhood $\mathcal{U}_f$ of $f$ such that all $g$ in $\mathcal{U}_f$ admit an unstable cone of parameter $\kappa$, index $1$ and vertex $(x,y)$ at every $(x,y) \in \T^n$ for which Corollary \ref{diametro} holds.
\end{clly}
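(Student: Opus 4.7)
The plan is to exploit the definite slack in the inequalities that underlie Lemmas \ref{conosf} and \ref{estiraf}, and to transfer those lemmas to any sufficiently $C^1$-close $g$ via uniform control of differentials: because $\T^n$ is compact, for every $\eta>0$ there is a $C^1$-neighborhood $\mathcal{U}_f^{\eta}$ of $f$ such that $\|D_{(x,y)}g - D_{(x,y)}f\| < \eta$ for every $(x,y) \in \T^n$ and every $g \in \mathcal{U}_f^{\eta}$. So the proof reduces to choosing $\eta$ small enough that both conclusions survive.

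First I would revisit the cone-invariance computation of Lemma \ref{conosf}. The estimate there bounded $\|w_2\|/\|w_1\|$ by $5\kappa/14$, leaving a gap of $9\kappa/14$ below the target $\kappa$. Writing $D_{(x,y)}g = D_{(x,y)}f + E$ with $\|E\|<\eta$ and reproducing the same chain of inequalities introduces an additive error whose size is controlled linearly in $\eta$, with a coefficient depending only on $\kappa$ and on the dominant expansion factor $14$. Choosing $\eta$ small enough keeps the final ratio strictly below $\kappa$, so $D_{(x,y)}g$ preserves $C^u_{\kappa}$ at every $(x,y)$.

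Next I would rerun the expansion estimate of Lemma \ref{estiraf}, where the computation produced $\|D_{(x,y)}f(v)\| \geq 4\sqrt{196/160}\,\|v\|$ for $v \in C^u_{\kappa}(x,y)$, with a strictly positive margin above the target factor $4$. The elementary bound $\|D_{(x,y)}g(v)\| \geq \|D_{(x,y)}f(v)\| - \eta\|v\|$ absorbs the perturbation into that margin and yields $\|D_{(x,y)}g(v)\| > 4\|v\|$ once $\eta$ is small. Intersecting the two neighborhoods produced by the two estimates gives the desired $\mathcal{U}_f$; and the proof of Corollary \ref{diametro}, which used only cone invariance and the factor-$4$ expansion, then transfers verbatim to $g$. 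I do not foresee a serious obstacle here: the whole argument is a uniform propagation of $C^0$-estimates across a small $C^1$-neighborhood on a compact manifold, which is precisely why slack was embedded in Lemmas \ref{conosf}--\ref{estiraf} in the first place.
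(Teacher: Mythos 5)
Your proof is correct and is precisely the argument that is implicit in the paper, which states Corollary \ref{conorobusto} without an explicit proof: the strict inequalities $5\kappa/14 < \kappa$ in Lemma \ref{conosf} and $196/160 > 1$ in Lemma \ref{estiraf} leave definite slack, and since $\T^n$ is compact the $C^1$ topology gives uniform control of $\|D_{(x,y)}g - D_{(x,y)}f\|$, so both estimates survive a small $C^1$ perturbation and Corollary \ref{diametro} transfers to $g$ unchanged.
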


We highlight now some of the other relevant geometrical and dynamical features the map $f$ possesses. All of them are straightforward to check:

\begin{rk}\label{dynF}
  1) $K^\varepsilon  \subset [-\frac{1}{2}-\varepsilon,\frac{1}{2}+\varepsilon]^{n-1} $.\\
  2) $f(K_0^\varepsilon \times S^1) \cap f(K_1^\varepsilon \times S^1) \supset [-\frac{1}{2}-\varepsilon,\frac{1}{2}+\varepsilon]^{n-1} \times S^1$.\\
  3) The set $K^\varepsilon \times S^1$ is a protoblender for $f$ relative to $[\frac{-1}{2}-\varepsilon,\frac{1}{2}+\varepsilon]^{n-1} \times S^1$.\\
  4) The points $(0,...,0,1)$ and $(\frac{2}{13},...,\frac{2}{13},\frac{15}{13}) $ are saddle fixed points for $f$, and the points $(0,...,0,0) $ and $(\frac{2}{13},...,\frac{2}{13},\frac{2}{13})$ are repelling fixed points for $f$. \\
  5) The local unstable manifold at $(0,1)$ is $W^u_{loc}(0,0)=(-\varepsilon,\varepsilon)^{n-1}\times\{1\}$.\\
  6) If $B \subset (1-a_0,1+a_0) \subset S^1$ satisfies $B:=W^s_{loc}(1)$ for $g_1$, then the local stable manifold for $f$ at $(0,1)$ is $W^s_{loc}(0,1)=\{(0,...,0)\}\times B$.
\end{rk}

We prove now that both the local stable and unstable manifolds at $(0,1)$ are dense in $\T^n$. This will yield $f$ is $C^1$ transitive.\\ In the sake of simplicity, from now on the points $(0,..,0)$ and $(\frac{2}{13},...,\frac{2}{13})$ in $\mathbb{T}^{n-1}$ will be referred to as $0$ and $\frac{2}{13}$ when there is no risk of confusion.

\begin{lema}\label{umd}
  The unstable set $W^u(0,1)$ is forward $f$-dense in $\T^n$.
\end{lema}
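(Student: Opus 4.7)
The plan is to establish density of $W^u(0,1) = \bigcup_{N \geq 0} f^N(D)$ where $D := W^u_{\mathrm{loc}}(0,1) = (-\varepsilon,\varepsilon)^{n-1} \times \{1\}$. It suffices to check that every open set $V = W \times J \subset \T^{n-1} \times S^1$ meets some iterate $f^N(D)$.

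First, I would propagate $D$ horizontally using the unstable cone field (Lemma \ref{conosf}) together with Corollary \ref{diametro}: the inradius of $f^N(D)$ is at least $4^N \varepsilon$, so for $N$ large the projection $\pi_1(f^N(D))$ covers $\T^{n-1}$ many times, and $f^N(D)$ contains small subdisks whose horizontal projections are arbitrarily prescribed small subsets of $K_0$ or of $K_1$. Because $\kappa$ is chosen small (Remark \ref{kappachico}), these subdisks are nearly horizontal, so their $y$-coordinates vary only slightly from a common base value.

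Second, I would use the proto-blender property $f(K_0 \times S^1) \cap f(K_1 \times S^1) \supset [-\tfrac12-\varepsilon,\tfrac12+\varepsilon]^{n-1} \times S^1$ (Remark \ref{dynF}\,(3)) as a symbolic coding device: for any finite word $(i_0,\ldots,i_{m-1}) \in \{0,1\}^m$ and any target point $(x^\ast,y^\ast)$ in the central strip, there is a preimage $(x_0,y_0) \in K_{i_0} \times S^1$ such that $f^k(x_0,y_0) \in K_{i_k} \times S^1$ for all $k < m$ and $f^m(x_0,y_0) = (x^\ast,y^\ast)$. Along this itinerary the $S^1$-coordinate obeys $y_{k+1} = g_{1+i_k}(y_k)$, so the $y$-motion is exactly one branch of the IFS $\langle g_1, g_2\rangle^+$. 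Combined with step one, for any prescribed bounded-length itinerary there is a subdisk of $f^N(D)$ that realises it.

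Finally, by $C^1$ robust minimality of $\{g_1,g_2\}$ (Lemma \ref{SRM}), given $y_0 \in S^1$ and open $J \subset S^1$ there is a word $(i_0,\ldots,i_{m-1})$ and an open $U_0 \ni y_0$ with $g_{1+i_{m-1}} \circ \cdots \circ g_{1+i_0}(U_0) \subset J$. Realising this word via the coding of step two produces a subdisk of $f^{N+m}(D)$ whose $y$-coordinate lies in $J$; extending the word to also direct the final $x$-coordinate to a chosen point of the central strip, and then applying a few further $A$-iterates along a backward branch avoiding $K^\varepsilon$ (which preserves $y$ while expanding $x$ by factor $14$), steers the $x$-coordinate into $W$. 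Hence $f^{N+m+\ell}(D) \cap V \neq \emptyset$. The main obstacle is the interlocking of the two coordinates: the IFS only advances $y$ while $x$ traverses $K$, yet $x$ must ultimately reach $W$, and it is precisely the proto-blender preimage argument that resolves this by granting complete freedom over the finite itinerary of a specific point of $f^N(D)$ without constraining its horizontal landing position in the central strip.
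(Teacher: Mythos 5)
Your proposal is correct and follows essentially the same route as the paper: spread the local unstable disk $(-\varepsilon,\varepsilon)^{n-1}\times\{1\}$ horizontally using the expansion in the $\T^{n-1}$ factor (the cone field of Lemma \ref{conosf} and Corollary \ref{diametro}) until its $\T^{n-1}$-projection is onto, then invoke minimality of IFS$(\mathcal F)$ to steer the $S^1$-coordinate into $V_2$. The paper's proof is considerably terser — it compresses the blender-coding mechanism that you spell out (realizing a prescribed finite $\{g_1,g_2\}$-word by selecting subdisks over $K_0$ or $K_1$ and tracking them through preimages) into the single assertion that some branch of the IFS-orbit of $f_2^k(1)$ enters $V_2$ — but the underlying argument is the same.
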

\begin{proof}
Let $V=V_1 \times V_2$ be any open set in $\T^n=\T^{n-1} \times S^1$. We show that there exists a point in $W^u_{loc}(0,1)=(-\varepsilon,\varepsilon)^{n-1}\times\{1\}$ with a forward iterate in $V$.\\
Let $f$ be $f(x,y)=(14x,f_2(y))$. Since $f_1(x)=14x$ expands, there exists a natural number $k$ such that $f^{k} (W^u_{loc}(0,1))\supset \T^{n-1} \times \{f_2^k(1)\}$. Since IFS$(\mathcal{F})$ is minimal, there exists a point in the orbit of $\langle \mathcal{F} \rangle ^{+}\left(f_2^{k}(1)\right)$ that intersects $V_2$ at, let's say, $f_2^{k+j}(1)$. This implies $f^{k+j}(W^u_{loc}(0,1)) \cap \T^{n-1} \times V_2 \neq \emptyset$ so $f^{k+j}(W^u_{loc}(0,1)) \cap  V \neq \emptyset$.
\end{proof}

\begin{lema}\label{smd}
  The stable set $W^s(0,a_1)$ is backwards $f$-dense in $\T^n$.
\end{lema}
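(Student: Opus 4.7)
The plan is to establish density of $W^s(0,a_1)$ in $\T^n$ by mirroring the proof of Lemma \ref{umd}, with the roles of the forward/unstable and backward/stable directions interchanged. Given any open $V=V_1\times V_2\subset\T^{n-1}\times S^1$, I aim to exhibit $q\in V$ and $N\in\N$ with $f^N(q)\in W^s_{loc}(0,a_1)=\{0\}\times B$; since $W^s_{loc}(0,a_1)\subset W^s(0,a_1)$, this gives $q\in V\cap W^s(0,a_1)$.

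The two driving ingredients are symmetric to those used in Lemma \ref{umd}. The first is that the $x$-dynamics of $f$ is the decoupled expanding map $x\mapsto 14x$ on $\T^{n-1}$, so its backward orbit of $0$ is the set of base-$14$ rationals, a dense subset of $\T^{n-1}$; hence for every sufficiently large $N$ there are plenty of $x_0\in V_1$ of the form $k/14^N$ with $k\in\Z^{n-1}$, automatically satisfying $14^N x_0=0$. The second is the robust minimality of IFS$(\mathcal{F})$ established in Lemma \ref{SRM}: for any $y_0$ and any neighborhood of $a_1$, some finite word $g_{i_M}\circ\cdots\circ g_{i_1}$ in $\langle \mathcal F\rangle^+$ sends $y_0$ into that neighborhood, in particular into $B$.

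I then combine the two: first fix any $y_0\in V_2$ and choose a word $g_{i_M}\circ\cdots\circ g_{i_1}$ from Lemma \ref{SRM} with $g_{i_M}\circ\cdots\circ g_{i_1}(y_0)\in B$; next select $x_0\in V_1\cap 14^{-N}(0)$ whose forward itinerary $(x_0,14x_0,\ldots,14^{N-1}x_0)$ realizes the prescribed sequence of visits, namely $K_{i_j}$ at step $j-1$ for $j=1,\ldots,M$ and $(K^\varepsilon)^c$ at steps $M,\ldots,N-1$. With this itinerary, the $y$-evolution along the orbit of $(x_0,y_0)$ computes to exactly $g_{i_M}\circ\cdots\circ g_{i_1}(y_0)$ (the $g_{i_j}$'s are applied at the visits to $K$, and the identity is applied outside $K^\varepsilon$), so $f^N(x_0,y_0)\in\{0\}\times B$ as required.

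The hard part is the itinerary-control step: showing that some $x_0\in V_1\cap 14^{-N}(0)$ realizes the prescribed itinerary. This reduces to a combinatorial argument on the base-$14$ digit expansion of $x_0$. Each condition on an iterate $14^s x_0$ (lie in $K_0$, lie in $K_1$, or avoid $K^\varepsilon$) is determined by a bounded number of digits of $x_0$, so the entire itinerary imposes $O(N)$ constraints on the $(n-1)N$ digits available, leaving substantial freedom to place $x_0$ inside $V_1$ once $N$ is large. The robustness built into Lemma \ref{SRM} absorbs any small deviations in the $y$-dynamics arising from the transition zone $K^\varepsilon\setminus K$, so that nearby IFS words (not exactly the chosen one) also land $y_0$ in $B$.
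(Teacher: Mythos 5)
Your proposal takes a genuinely different route from the paper's. You argue \emph{symbolically}: fix $y_0\in V_2$, use minimality of IFS$(\mathcal F)$ to pick a word $g_{i_M}\circ\cdots\circ g_{i_1}$ sending $y_0$ into $B$, and then try to find $x_0\in V_1$ whose itinerary under $x\mapsto 14x$ realizes that word and ends at $0$. The paper argues \emph{geometrically}: it places a nearly-horizontal $(n-1)$-disk $\gamma$ inside $V$, tangent to the unstable cone field, and uses Corollary \ref{diametro} to get $ir_{n-1}(f^k(\gamma))\geq 4^k\,ir_{n-1}(\gamma)$; after finitely many iterates $f^{k_0}(\gamma)$ contains a whole fiber $\T^{n-1}\times\{z\}$, and only then is minimality of the IFS invoked to push $z$ into $B$. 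The key advantage of the paper's order of operations is that once the disk covers all of $\T^{n-1}$, \emph{every} $x$-itinerary is available for free; no itinerary-control lemma is needed, and the same argument survives $C^1$-perturbation (which matters for the robust version, Lemma \ref{smdg}).

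As written, your argument has gaps precisely where you flag ``the hard part.'' (i) You ask for $x_0\in V_1$ with $x_0\in K_{i_1}$, but $V_1$ is an arbitrary open set and need not meet $K_{i_1}$; you must prepend a stretch of itinerary that travels from $V_1$ into $K_{i_1}$ while staying in $(K^\varepsilon)^c$, so that $y_0$ is frozen meanwhile. (ii) The realizability of a prescribed itinerary, compatibly with $x_0\in V_1$ and $14^N x_0=0$, is asserted, not proved; note that $K_0$ and $K_1$ are \emph{not} base-$14$ digit cylinders on $\R/2\Z$ (they have length $1/14$, whereas the digit cells have length $1/7$), so one must actually verify the transitivity of the induced partition $\{K_0,K_1,(K^\varepsilon)^c\}$ and that the endpoint $0\in K_0$ admits preimages in $(K^\varepsilon)^c$. (iii) The claim that the $C^1$-robustness of Lemma \ref{SRM} ``absorbs'' what happens in the transition annulus $K^\varepsilon\setminus K$ is not correct: there the $y$-factor map is a genuine convex interpolation between $g_i$ and the identity, not a small $C^1$ perturbation of $g_i$, so it is not covered by robust minimality. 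The correct fix is to force the itinerary to avoid $K^\varepsilon\setminus K$ altogether, which must be built into step (ii) rather than handled by robustness.
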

\begin{proof}
Let $V=V_1 \times V_2$ be any open set in $\T^{n-1} \times S^1$ and let $W^s_{loc}(0,1)=\{0\}\times B$ where $B=W_{loc}^s(1)$ for $g_1$. Let $p=(p_1,...,p_{n-1},p_n) \in V$ and a well defined disk $\gamma:(-r,r)^{n-1} \rightarrow V /\quad \gamma(t_1,...,t_{n-1})=(p_1+t_1,...,p_{n-1}+t_{n-1}, p_n)$. Since for all $t \in (-r,r)^{n-1}$ and all $v \in T_{\gamma(t)}V$ the differential $D_t\gamma(v_1,...,v_n)=(v_1,...,v_{n-1},0)$, it holds that $\gamma'(t) \in C^u_{\kappa}(\gamma(t))$ at all times. By Corollary \ref{diametro}, for all $k \in \N$, $ir_{n-1}(f^{k}(\gamma))\geq 4^{k}ir_{n-1}(\gamma)$. Since $ir_{n-1}(\mathbb{T}^{n-1})\leq \sqrt{n}$, there exists $k_0 \in \mathbb{N}$ and $z \in S^1$ such that $f^{k_0}(\gamma) \supset  \T^{n-1} \times \{z\} $. Again, since IFS$(\mathcal{F})$ is minimal there exists a branch of the orbit $\langle \mathcal{F} \rangle^+(z)$ that enters $B$, say, at $\langle \mathcal{F} \rangle^j(z)$. In turn, $f^{k_0+j}(\gamma) \cap \{0\} \times B \neq \emptyset$. Therefore, it exists a point in $V$ (in $\gamma$) with a forward iterate entering in $W^s_{loc}(0,1)$.
\end{proof}

\begin{thm}\label{fisT}
  The map $f$ defined by Equation (\ref{mainmap}) is robustly transitive.
\end{thm}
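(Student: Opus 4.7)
The plan is to verify condition (2) of Proposition \ref{equi}: for any nonempty open sets $U, V\subset\T^n$, I produce an $N$ with $f^N(U)\cap V\neq\emptyset$, and then observe that every step survives $C^1$ perturbations. Inside $U$ I first pick a small $(n-1)$-dimensional disk $\gamma$ tangent to the unstable cone field $C^u_\kappa$; such $\gamma$ exists because $C^u_\kappa$ contains the horizontal hyperplane $\{v_n=0\}$ at every point. By Lemma \ref{conosf} the iterates $f^k(\gamma)$ stay tangent to $C^u_\kappa$, and by Corollary \ref{diametro} their $(n-1)$-inradius grows by a factor $4$ per step. Hence for some $k_0$, the image $f^{k_0}(\gamma)$ contains a full central slice $\T^{n-1}\times\{z_0\}$ for some $z_0\in S^1$; this is the same device that drives the proofs of Lemmas \ref{umd} and \ref{smd}.

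Next I spread this slice using the blender structure. Setting $L:=[-1/2,1/2]^{n-1}\subset\T^{n-1}$, a direct computation gives $14K_0=14K_1=L$ and $14(\T^{n-1}\setminus K^{\varepsilon})=\T^{n-1}$ (each $y\in\T^{n-1}$ has $14^{n-1}$ preimages under $\times 14$, of which at most two lie in $K^{\varepsilon}$ because $\varepsilon$ is tiny). Using the explicit form of $f$ from Remark \ref{rkdinf}, for every $z'\in S^1$ one obtains
\begin{equation*}
f(\T^{n-1}\times\{z'\})\;\supset\;(\T^{n-1}\times\{z'\})\cup(L\times\{g_1(z'),g_2(z')\}),\qquad f(L\times\{z'\})\supset\T^{n-1}\times\{z'\}.
\end{equation*}
A straightforward induction then yields $f^N(\T^{n-1}\times\{z_0\})\supset\T^{n-1}\times O_{N-1}(z_0)$, where $O_k(z_0)$ is the set of points of $S^1$ reachable from $z_0$ in at most $k$ steps of $IFS(\mathcal F)$. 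Writing $V\supset V_1\times V_2$ with $V_1\subset\T^{n-1}$, $V_2\subset S^1$ nonempty open, minimality of $IFS(\mathcal F)$ (Lemma \ref{SRM}) makes $\langle\mathcal F\rangle^+(z_0)$ dense in $S^1$. So for some $N$ one has $O_{N-1}(z_0)\cap V_2\neq\emptyset$, and the corresponding slice lies in $f^{k_0+N}(\gamma)\subset f^{k_0+N}(U)$ and meets $V_1\times V_2\subset V$.

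For robustness every ingredient is $C^1$-stable: the unstable cone field with its $4$-fold expansion (Corollary \ref{conorobusto}), the geometric coverings $14K_i\supset L$ and $14(\T^{n-1}\setminus K^{\varepsilon})=\T^{n-1}$ (open conditions that persist after small $C^1$ perturbation, possibly with $L$ replaced by a slightly shrunken $L'$), and the robust minimality of $IFS(\mathcal F)$ built into Lemma \ref{SRM}. The main obstacle is the bookkeeping in the inductive spreading step: at each iterate the image of the slice decomposes into a \emph{full-slice} part $\T^{n-1}\times\{\cdot\}$ and a \emph{sub-slice} part $L\times\{\cdot\}$, and one must carefully check that one extra $f$-iteration converts every sub-slice into a full slice while only creating new sub-slices at newly reached IFS points. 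Once this combinatorics is organised, transitivity of $f$ and of all its $C^1$-small perturbations follows from the two robust facts: density of the $IFS(\mathcal F)$ orbit in $S^1$ and uniform expansion of the first-coordinate dynamics.
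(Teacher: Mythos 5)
Your argument for transitivity of $f$ is correct and takes a genuinely different route from the paper. The paper routes through the saddle fixed point $(0,1)$: Lemma~\ref{umd} shows $W^u(0,1)$ is forward dense (using that $1$ is fixed by $g_1$, so $W^u_{loc}(0,1)$ stays at height $1$ while expanding), Lemma~\ref{smd} shows any cone-tangent disk eventually hits $W^s_{loc}(0,1)$, and Theorem~\ref{fisT} glues these with an inclination-lemma argument. You instead bypass the saddle entirely: you track the image of an arbitrary cone-tangent disk directly, and once it yields a full slice $\T^{n-1}\times\{z_0\}$ you run an explicit covering induction $f^N(\T^{n-1}\times\{z_0\})\supset\T^{n-1}\times O_{N-1}(z_0)\cup L\times O_N(z_0)$, then invoke IFS minimality. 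Your covering inclusions $f(\T^{n-1}\times\{z'\})\supset(\T^{n-1}\times\{z'\})\cup(L\times\{g_1(z'),g_2(z')\})$ and $f(L\times\{z'\})\supset\T^{n-1}\times\{z'\}$ are correct (using $14K_0=14K_1=L$ and that at most two of the $14^{n-1}$ preimages of any point lie in $K^{\varepsilon}$), and the induction bookkeeping you flag as the "main obstacle" does close cleanly. Both your approach and the paper's rest on the same two pillars --- uniform cone expansion in the $\T^{n-1}$ factor and minimality of $\mathrm{IFS}(\mathcal F)$ in $S^1$ --- but yours is more elementary, avoiding invariant-manifold machinery. (You inherit the same imprecision as the paper's Lemma~\ref{smd} in the claim $f^{k_0}(\gamma)\supset\T^{n-1}\times\{z_0\}$, since an expanding disk bends inside $K^\varepsilon\times S^1$; this is not specific to your argument.)

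However, your robustness paragraph is too thin and, more importantly, points at the wrong mechanism. Your whole spreading argument lives on the exact product slices $\T^{n-1}\times\{z\}$, and the covering inclusions you use come from $f$ literally splitting as $(x,y)\mapsto(14x,\hat f_2(y))$ on $K\times S^1$ and as $A$ outside $K^\varepsilon\times S^1$. A $C^1$ perturbation $g$ destroys this product structure: $g(\T^{n-1}\times\{z'\})$ is no longer a union of slices of the form $\T^{n-1}\times\{\cdot\}$ and $L\times\{\cdot\}$, and "the geometric coverings $14K_i\supset L$ persist" does not by itself recover your inclusions for $g$. What replaces the vertical fibers $\{0\}\times S^1$, $\{\tfrac{2}{13}\}\times S^1$ for the perturbed map is exactly what the paper's Theorem~\ref{NHSP} (normal hyperbolicity) supplies: unique $g$-invariant $C^1$-nearby submanifolds $N$, $N'$ on which $g$ is conjugate to $g_1$, $g_2$, together with the conjugated IFS and the unstable cone field. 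These are the ingredients of Lemmas~\ref{gestirapatra}, \ref{umdg}, \ref{smdg} and Theorem~\ref{fisRT}, and none of them is dispensable in your approach either. So: your proof of transitivity of $f$ itself is a valid and more direct alternative to the paper's, but to upgrade it to robust transitivity you still need the normally hyperbolic continuation argument, not just persistence of the cone field and of the linear coverings.
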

\begin{proof}
   According to Lemmas \ref{umd} and \ref{smd} the open set $U \times B$ is an open neighborhood of $(0,1)$ that is dense under forward and backward iteration by $f$. Hence, by an inclination argument, it holds that for all open sets $A$ and $B$ in $\T^n$ there exists a natural number $k$ such that $f^k(A) \cap B \neq \emptyset$ which yields transitivity for $f$.
\end{proof}

 We turn our attention now to prove robustness of Theorem \ref{fisT}. We begin with a series of considerations about the perturbation that are required to understand its dynamics.\\ Start with a small $C^1$ neighborhood $\mathcal{V}_{f}$ of $f$. Notice first that after item (2) at Remark \ref{dynF}, all maps in $\mathcal{V}_{f}$ preserve the blending region $K \times S^1$. As well, after Corollary \ref{conorobusto}, all maps in $\mathcal{V}_{f}$ admit a field of unstable cones for which Corollary \ref{diametro} holds.\\

Recall that $\{0\} \times S^1$ and $\{\frac{2}{13}\} \times S^1$ are $f$-invariant disjoint submanifolds to which the restriction of $f$ configures a minimal iterated function system. \\
Let $g$ be an arbitrary map in $\mathcal{V}_{f}$, by item (4) at Remark \ref{dynF}, $g$ admits two saddle fixed points we name as $0'$ and $\frac{2}{13}'$ which are the continuation points of the saddles of $f$. Recall that $f_{
|\{0\} \times S^1}=g_1$ is a Morse-Smale diffeomorphism, so by their stability there exists $\varepsilon >0$ such that for every $g: \{0\} \times S^1 \to \{0\} \times S^1$ satisfying $||g-g_1||<\varepsilon$, there exists a homeomorphism $H$ such that $H \circ g_1= g \circ H$.\\

Notice that $f([\frac{-1}{2.14^2}, \frac{1}{2.14^2}]^{n-1} \times S^1)= K_0 \times S^1$. It yields that the preimage map  $f^{-1}:K_0 \times S^1 \to [\frac{-1}{2.14^2}, \frac{1}{2.14^2}] \times S^1$ is a diffeomorphism satisfying that $\bigcap_{n \in \mathbb{N}} f^{-n}(K_0 \times S^1)=\{0\}\times S^1$, which yields $f^{-1}$ is normally hyperbolic at $\{0\}\times S^1$. Apply Theorem \ref{NHSP} to see that $\bigcap_{n \in \mathbb{N}} g^{-n}(K_0 \times S^1)= N$ is a unique $g$-invariant submanifold of $\T^n$ and that there exists $h_1: \{0\}\times S^1 \to N$ a $C^1$ diffeomorphism that can be set as close to the canonical inclusion as desired by choosing $g$ close enough to $f$. Take then $\mathcal{V}_f$ and reduce it as needed as to satisfy $||g_1 - h_1^{-1}\circ g_{|N} \circ h_1||< \varepsilon$. In turn there exists $H_1: \{0\}\times S^1 \to N$ a homeomorphism such that $H_1 \circ g_1 =g_{|N} \circ H_1$ after the stability argument about $g_1$ right above. Observe that $H_1= h_1 \circ H$ and that $0' \in N$. An identical argument yields that there exists at $\frac{2}{13}'$ a $g$-invariant submanifold $ N'$ which is $C^1$-diffeomorphic to $\{\frac{2}{13}\}\times S^1$ and a conjuating homeomorphism $H_2: \{\frac{2}{13}\}\times S^1 \to  N'$ such that $H_2 \circ g_2 =g_{|N'} \circ H_2$.\\ We proved that $g_{|N}$ is conjugate to $g_1$ and that $g_{| N'}$ is conjugate to $g_2$.

\begin{rk}\label{doesnotmisspoints}
  Since $N=\bigcap_{n \in \mathbb{N}} g^{-n}(K_0 \times S^1)$, for every continuous $(n-1)$-disk $\gamma$ that crosses $K_0$ intersecting it in every side of its boundary, then $\gamma \cap N \neq \emptyset$. Moreover, since $N$ is $C^1$ close to $S^1$ it can be regarded as a 'vertical' submanifold. 
\end{rk}

\begin{rk}\label{cuencasencoordenadas}
  The maps $g_{|N}$ and $g_{| N'}$ do not configure an IFS since they are not defined on the same support, but it comes from Lemma \ref{SRM} and the conjugations stated above that if $\pi:\mathbb{T}^{n-1} \times S^1 \to S^1/\quad \pi(x,y)=y$ is the canonical projection then $\pi\left(\{ g_{| N} \mbox{ is a contraction or } g_{|N'} \mbox{ is a contraction}\}\right)=S^1$.
\end{rk}

We proceed now to prove a last series of lemmas that will lead to the proof of robust transitivity for the map defined by Equation (\ref{mainmap}). We start showing that Remark \ref{cuencasencoordenadas} yields that any neighborhood of $0'$ has a preimage by a power of the perturbation $g$ that projects surjectively onto the second factor $S^1$.

\begin{lema}\label{gestirapatra}
   For every $U$ open neighborhood of $0'$ in $\mathbb{T}^n$ and $U' \subset U$ any open subset, there exists $m_0 \in \mathbb{N}$ such that $\pi \left( g^{-m}(U')\right)=S^1$ for all $m \geq m_0$.
\end{lema}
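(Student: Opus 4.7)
My plan relies on the blender structure of $f$ together with the robust minimality of IFS$(\mathcal{F})$ from Lemma \ref{SRM}, and proceeds in three stages. First, I reach the blender region in a bounded number of preimage steps. Remark \ref{dynF}(2) gives $f(K_0\times S^1)\supset [-\frac12-\varepsilon,\frac12+\varepsilon]^{n-1}\times S^1$, and outside $K^\varepsilon$ the map $f$ coincides with the linear expansion $A$, so a second iterate wraps the first factor $14$ times around $\T^{n-1}$ and yields $f^2(K_0\times S^1)=\T^n$. By continuity, shrinking $\mathcal{V}_f$ if necessary, the same holds for every $g\in\mathcal{V}_f$. Therefore, for any open $U'\subset U$, the set $V_0:=g^{-2}(U')\cap(K_0\times S^1)$ is a nonempty open subset of $K_0\times S^1$, independently of where $U'$ sits inside $U$.

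Second, I iterate preimages of $V_0$ using only the two distinguished ``blender'' branches of $g^{-1}$ whose images lie in $K_0^\varepsilon\times S^1$ and $K_1^\varepsilon\times S^1$ respectively. These branches are always available at every step since the $x$-coordinate stays in $K^\varepsilon\subset[-\frac12,\frac12]^{n-1}$ (again by Remark \ref{dynF}(2)); on the $y$-coordinate they act by $C^1$-perturbations of $g_1^{-1}$ and $g_2^{-1}$, because on $K\times S^1$ the map $f$ equals $\hat{f}$ and $g$ is $C^1$-close to $f$. The robust minimality of IFS$(\mathcal{F})$, combined with a standard compactness argument, provides $\ell_0\in\N$ such that for every $y^*\in S^1$ there is a blender-branch preimage sequence of length $\leq\ell_0$ starting inside $V_0$ whose final $y$-coordinate equals $y^*$.

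Third, I match the exact preimage length. Set $m_0:=2+\ell_0$. Given $m\geq m_0$ and a prescribed $y^*\in S^1$, I take the sequence just described, of some length $\leq m_0$ and ending at $y^*$, and pad with ``neutral'' preimage branches of $g^{-1}$: any of the at least $14^{n-1}-2\geq 12$ branches whose base point lies outside $K^\varepsilon$, where $g=A$ preserves the $y$-coordinate and which are available at every point. The padding does not affect the final $y$-coordinate, so the resulting preimage sequence has length exactly $m$ and projects to $y^*$, yielding $\pi(g^{-m}(U'))=S^1$ for every $m\geq m_0$.

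The main technical obstacle is that the $y$-component of each blender branch of $g^{-1}$ depends on the $x$-coordinate of the base point, so it is not literally $g_1^{-1}$ or $g_2^{-1}$ but only a $C^1$-perturbation that varies along the branch. This is precisely the setting that the robust version of Lemma \ref{SRM} is tailored to, and the conjugacies $g|_N\sim g_1$, $g|_{N'}\sim g_2$ established just before Remark \ref{cuencasencoordenadas}, together with Remark \ref{cuencasencoordenadas} itself, make the identification rigorous by playing the role that the overlap $A\cup B=S^1$ played in the unperturbed proof of Lemma \ref{SRM}.
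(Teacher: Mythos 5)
Your overall strategy (pull $U'$ into the blending region, then grow the $S^1$-projection by iterating the two blender branches of $g^{-1}$, each $C^1$-close to $g_1^{-1}$ resp.\ $g_2^{-1}$, invoking Lemma \ref{SRM}) is the same idea as the paper's, and your first stage is actually a welcome clarification: the paper's proof tacitly assumes $U'$ already lies in the region where both blender preimage components exist, whereas you justify reaching $K_0\times S^1$ after two backward steps for arbitrary $U$.

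There is, however, a genuine gap in your third stage. You pad a chain of length $\leq m_0$ up to length exactly $m$ using preimage branches of $g^{-1}$ with base point outside $K^\varepsilon$, and you claim these are ``neutral'' because ``$g=A$ preserves the $y$-coordinate'' there. That identity holds only for $f$ (Remark \ref{rkdinf}(5)), not for an arbitrary perturbation $g\in\mathcal{V}_f$: a $C^1$-small perturbation of $f$ will in general move the $y$-coordinate everywhere, including outside $K^\varepsilon$. Hence the ``neutral'' branches of $g^{-1}$ only approximately preserve $y$, and after padding the final $y$-coordinate is no longer the prescribed $y^*$. So your exact-length construction does not deliver $\pi(g^{-m}(U'))=S^1$ as written. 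The paper avoids this by never controlling the chain length directly: it tracks a nested sequence of preimage components $X_1\supset X_2\supset\dots$ inside $K_0\times S^1$ or $K_1\times S^1$ whose projected length grows strictly at each step (the argument from Lemma \ref{SRM}), and once $\pi(X_{m_0})=S^1$ it remains $S^1$ for all larger $m$, because on each blender component the $y$-dynamics of $g$ is a $C^1$-perturbation of the diffeomorphism $g_1$ or $g_2$ and therefore maps a full circle to a full circle. Replacing your padding step with this monotonicity observation closes the gap; the compactness argument you invoke in stage two to get a uniform $\ell_0$ is also a little informal, but it is rendered unnecessary once you adopt the monotone-growth formulation, since $m_0$ then comes directly out of the strict-growth estimate of Lemma \ref{SRM}.
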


\begin{proof}
  Begin noticing that if $\pi: \mathbb{T}^{n-1} \times S^1 \to S^1$ is the canonical projection onto the second factor, $||(\pi \circ g) - (\pi \circ f)||<||g-f||$ holds. By Remark \ref{rkdinf} and Definition \ref{mapaefegorro} we conclude that $(\pi \circ f)_{|K_0 \times S^1}={g}_1$ and that $(\pi \circ f)_{|K_1 \times S^1}={g}_2$. Thereafter, since ${g}_1$ and ${g}_2$ are Morse-Smale diffeomorphisms, $\mathcal{V}_f$ can be reduced until $(\pi \circ g)_{|K_0 \times S^1}$ is conjugate to ${g}_1$ and $(\pi \circ g)_{|K_1 \times S^1}$ is conjugate to ${g}_2$. The same accounts for the inverse image map $g^{-1}$.\\
   Let then $U$ be any open neighborhood of $0'$ and $U' \subset U$ any open subset. Recall that $g$ preserves the blending region $K \times S^1$, so the set $g^{-1}(U')$ contains a preimage component in $K_0 \times S^1$ and another one in $K_1 \times S^1$. By the conjugations above and the proof of Lemma \ref{SRM}, $\pi(g^{-1}(U'))$ is a curve with strictly larger length than $\pi(U')$ in either one of those components. Call $X_1$ the component that has strictly larger projection and consider $g^{-1}(X_1) \subset g^{-2}(U')$. Find again a component of $g^{-1}(X_1)$ in $K_0 \times S^1$ and another one in $K_1 \times S^1$, project them to $S^1$ and choose $X_2$ the one that projects with strictly larger length. Repeat the process until finding, via the same argument at Lemma \ref{SRM}, a natural number $m_0$ such that $\pi(g^{-m}(U'))=S^1$ for all $m \geq m_0$.
\end{proof}

We are in condition to state and prove the last two lemmas required for the proof of robust transitivity. For the local stable and unstable manifolds of $g$ at $0'$, let $\rho$ be a transversal curve to $\mathbb{T}^{n-1}$ centered at $0'$ contained in the local stable manifold of $g_{|N}$ in $ N$. It holds that $W^s_{loc}(0'):=\{\rho(t)\}_{t \in (-\varepsilon,\varepsilon)} \subset N$ with $\rho(0)=0'$.\\ In the same fashion there exists a $(n-1)$-disk $\lambda$ which is transversal to $N$ such that $W^u_{loc}(0'):=\{ \lambda(t)\}_{t \in (-\varepsilon,\varepsilon)^{n-1}}$ with $\lambda(0)=0'$.

\begin{lema}\label{umdg}
  $W^u_{loc}(0')$ is forward $g$-dense in $\mathbb{T}^n$.
\end{lema}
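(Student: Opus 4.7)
The plan is to mirror the strategy of Lemma \ref{umd}, replacing the explicit product structure of $f$ by the robustly preserved cone field and blender structure available for $g \in \mathcal{V}_f$. Fix any open $V = V_1 \times V_2 \subset \T^{n-1} \times S^1$; I will produce $k \in \N$ with $g^k(W^u_{loc}(0')) \cap V \neq \emptyset$.

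First I exploit horizontal expansion. The disk $W^u_{loc}(0')$ is tangent to the unstable cone field $C^u_\kappa$, a tangency preserved by $g$ (Corollary \ref{conorobusto}); by Corollary \ref{diametro} the inradius of its iterates grows by a factor of at least $4$ per step. Since for such cone-tangent disks the inradius and diameter are comparable (Remark \ref{kappachico}), there exists $k_0 \in \N$ for which $g^{k_0}(W^u_{loc}(0'))$ contains a sub-disk $D$ whose horizontal projection covers $[-\tfrac{1}{2},\tfrac{1}{2}]^{n-1}$; in particular, $D$ crosses both $K_0 \times S^1$ and $K_1 \times S^1$ completely, in the sense of Remark \ref{doesnotmisspoints}.

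Next I propagate $D$ through the blender to obtain branching. Extract sub-disks $D_0 \subset D \cap (K_0 \times S^1)$ and $D_1 \subset D \cap (K_1 \times S^1)$, each crossing its cube on every boundary side. Since the blending region is preserved by every map in $\mathcal{V}_f$ (item (2) of Remark \ref{dynF}), each $g(D_i)$ is itself a cone-tangent disk that, because $g$ expands by a factor close to $14$ in the first factor and $14\,K_i \supset [-\tfrac{1}{2},\tfrac{1}{2}]^{n-1}$, once again crosses both cubes completely. Recursively, for every finite word $\sigma = \sigma_1\cdots\sigma_j \in \{0,1\}^j$ there is a sub-disk $D_\sigma \subset g^{k_0+j}(W^u_{loc}(0'))$ crossing $K_{\sigma_j} \times S^1$. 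Because $f$ acts in the $S^1$-factor as $g_1$ over $K_0 \times S^1$ and as $g_2$ over $K_1 \times S^1$ (by \eqref{mapaefegorro} together with Remark \ref{rkdinf}), and $g$ is $C^1$-close to $f$, the projection $\pi(D_\sigma) \subset S^1$ amounts to a small $C^1$-perturbation of $g_{\sigma_j} \circ \cdots \circ g_{\sigma_1}$ applied to $\pi(D_0)$.

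Finally I invoke robust minimality: since IFS$(\mathcal{F})$ is $C^1$ robustly minimal by Lemma \ref{SRM}, the perturbed IFS still has a dense orbit in $S^1$, so for some $\sigma$ we get $\pi(D_\sigma) \cap V_2 \neq \emptyset$. One further iterate of $g$ spreads $D_\sigma$ horizontally across all of $\T^{n-1}$ by the factor-$14$ expansion, covering $V_1$ and producing the desired point of $g^{k_0+j+1}(W^u_{loc}(0'))$ in $V_1 \times V_2 \subset V$. The hard part will be the second step: rigorously matching the branching of $(n-1)$-dimensional cone-tangent sub-disks through the blender with iterates of a $C^1$-perturbed IFS at the level of $S^1$-projections. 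On $K^\varepsilon \setminus K$ the interpolation in \eqref{mainmap} between $\hat{f}$ and $A$ distorts the second-coordinate dynamics, so the argument must confine each $D_\sigma$ to the exact-IFS region $K \times S^1$; the robustness quantifier in Lemma \ref{SRM} is precisely what is needed to absorb the residual $C^1$-error coming from $g - f$.
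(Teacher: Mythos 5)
Your plan mirrors Lemma \ref{umd}, which worked for $f$ because $f$ preserves the product structure exactly on $K\times S^1$, so that $\pi\circ f$ literally equals $g_1$ or $g_2$ there and the IFS argument applies verbatim. For a perturbation $g\in\mathcal{V}_f$ this breaks down, and the gap you flag in your last sentence is genuine and unpatched as written. After perturbation the second coordinate of $g(x,y)$ depends on $x$, so there is no well-defined self-map of $S^1$ governing $\pi(D_\sigma)$: the ``effective'' circle map changes with the word $\sigma$ and with position on the disk. Lemma \ref{SRM} grants robust minimality only for a \emph{fixed} perturbed pair $\hat{\mathcal F}=\{\hat g_1,\hat g_2\}$, not for a position-dependent, non-autonomous family of maps each individually close to $g_1$ or $g_2$; and since the $C^1$ error has no mechanism preventing it from accumulating over arbitrarily long compositions (which minimality requires you to control), the assertion that ``the robustness quantifier in Lemma \ref{SRM} absorbs the residual error'' does not follow from anything you have cited. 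In short: step two of your argument needs a dynamical system on $S^1$ to feed into the IFS machinery, and $\pi\circ g$ does not supply one.

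The paper avoids this precisely by manufacturing that missing dynamical system. Via Theorem \ref{NHSP} it obtains genuine one-dimensional $g$-invariant submanifolds $N$ and $N'$ (the continuations of $\{0\}\times S^1$ and $\{\tfrac{2}{13}\}\times S^1$) on which $g$ restricts to circle diffeomorphisms conjugate to $g_1$ and $g_2$ respectively; the IFS-type reasoning is then carried out \emph{on $N$ and $N'$}, where it is legitimate. Concretely, the paper's proof (i) argues by contradiction near $0'$, using Lemma \ref{gestirapatra} (backward iterates of any neighborhood of $0'$ project onto all of $S^1$) together with the saddle nature of $0'$ to force a preimage of $U'$ to meet $\lambda=W^u_{loc}(0')$, establishing forward density of $\lambda$ in a neighborhood $U$ of $0'$; (ii) upgrades this to density in $\T^{n-1}\times U_2$ by horizontal expansion; and (iii) propagates across the circle factor using Remark \ref{doesnotmisspoints} to land iterates of $\lambda$ on $N'$ and $N$, and the sink behaviour of $\tfrac{2}{13}'$ for $g_{|N'}$ and $0'$ for $g_{|N}$ together with the conjugacies and Remark \ref{cuencasencoordenadas} to cover $S^1$. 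If you want to rescue your more direct route, the missing ingredient is exactly this reduction to honest circle dynamics on the normally hyperbolic invariant curves $N,N'$ rather than on the $S^1$-projections of the $(n-1)$-disks; without it, the branching step is not rigorous.
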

\begin{proof}
  Let $U=U_1 \times U_2$ be a small open neighborhood of $0'$ in $\mathbb{T}^{n}$ and suppose $W^u_{loc}(0')$ is not forward $g$-dense in $U$. It means there exists an open subset $U' \subset U$ such that for all $n \in \mathbb{N}$, $g^n\left(\lambda \right) \cap U'=\emptyset$. Apply Lemma \ref{gestirapatra} to find $m_0$ such that $\pi(g^{-m}(U'))=S^1$ for all $m \geq m_0$ and consider the preimage $g^{-m_0-1}(U')$ contained in $K_0 \times S^1$. Since $0'$ is a saddle for $g$ which only contracts in $g_{|N}$, there exists $k_0 \geq m_0+1$ such that $g^{-k_0}(U')\cap \lambda \neq \emptyset$. This gives a contradiction. Consequently, $\lambda$ is forward $g$-dense in $U$. By expansion in the first factor we have that $\lambda$ is forward $g$-dense in $\mathbb{T}^{n-1} \times U_2$. Since $\frac{2}{13}'$ is a saddle for $g$ which contracts only in $g_{|N'}$ and Remark \ref{doesnotmisspoints} gives that by this point for some $m \in \mathbb{N}$, $g^m(\lambda) \cap N' \neq \emptyset$, then $\lambda$ is forward $g$-dense in $\mathbb{T}^{n-1} \times \bigcup_{k \in \mathbb{N}} (g_{|N'})^k(U_2)$ and by an analogous argument it is forward $g$-dense in $\mathbb{T}^{n-1} \times \bigcup_{j \in \mathbb{N}} (g_{|N})^j ( \bigcup_{k \in \mathbb{N}} (g_{| N'})^k(U_2))$ which is the whole $\mathbb{T}^n$.
\end{proof}

The following lemma relies on the following fact from Euclidean spaces: if $\vec{v}$ is a vertical vector and $\mathcal{H}$ is a horizontal hyperplane, they intersect in a robust way. 

\begin{lema}\label{smdg}
  $W^s_{loc}(0')$ is backwards $g$-dense in $\mathbb{T}^n$.
\end{lema}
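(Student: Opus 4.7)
The plan is to pick an arbitrary open set $V=V_1\times V_2\subset\T^{n-1}\times S^1$ and a point $p=(p_1,p_n)\in V$, and to produce $m\in\N$ and $p_0\in V$ with $g^m(p_0)\in\rho=W^s_{loc}(0')$; this gives $V\cap g^{-m}(\rho)\neq\emptyset$, and since $V$ was arbitrary, $\rho$ is backwards $g$-dense in $\T^n$.

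First I would mimic the opening of Lemma \ref{smd}: the small horizontal $(n-1)$-disk $\gamma(t_1,\dots,t_{n-1})=(p_1+t,p_n)\subset V$ has tangent vectors of the form $(v_1,\dots,v_{n-1},0)$, hence lying in $C^u_\kappa(\gamma(t))$ at every point. Corollary \ref{conorobusto} keeps $C^u_\kappa$ unstable for the perturbation $g$, and Corollary \ref{diametro} then yields $ir_{n-1}(g^k(\gamma))\geq 4^k\, ir_{n-1}(\gamma)$. Thus for $k_0$ large enough, the almost-horizontal $(n-1)$-disk $g^{k_0}(\gamma)$ has inradius much larger than the diameter of $K_0\subset\T^{n-1}$, so it crosses the cube $K_0\times S^1$ through every side of its $(n-1)$-dimensional boundary.

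At that point I would invoke Remark \ref{doesnotmisspoints}: since $N=\bigcap_{j\in\N}g^{-j}(K_0\times S^1)$ is the (almost) vertical backbone of the blender cube, $g^{k_0}(\gamma)\cap N\neq\emptyset$, and the intersection is robust --- this is the concrete incarnation of the horizontal hyperplane / vertical vector fact highlighted just before the statement. Pick $q=g^{k_0}(p_0)\in N$ with $p_0\in\gamma\subset V$, and transport the problem to $N$ via the conjugation $H_1\circ g_1=g_{|N}\circ H_1$. Because $g_1$ is a Morse--Smale diffeomorphism of $S^1$ with unique attracting fixed point at $1$ (identified with $0'$) and a single repelling fixed point, the basin of $0'$ for $g_{|N}$ is all of $N$ except the continuation of that repeller; for every such $q$ one has $g^j(q)\to 0'$ as $j\to\infty$, so $g^j(q)\in\rho$ once $j$ is sufficiently large.

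The only loose end is the non-generic case in which $q$ coincides with the single repeller of $g_{|N}$. A small translation of $\gamma$ inside $V$ shifts $g^{k_0}(\gamma)\cap N$ continuously along the $1$-dimensional curve $N$, so we can always slide $\gamma$ to avoid this one bad point. Combining the two steps, $g^{k_0+j}(p_0)\in\rho$ for some $p_0\in V$, proving the announced density. The main delicate point is checking rigorously that $g^{k_0}(\gamma)$ crosses $K_0$ on every side so that Remark \ref{doesnotmisspoints} actually applies; this is the robust form of the horizontal/vertical intersection fact and rests on both the inradius expansion (Corollary \ref{diametro}) and the preservation of the unstable cone field under perturbation (Corollary \ref{conorobusto}).
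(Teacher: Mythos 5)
Your proposal follows the same skeleton as the paper's argument: take a nearly horizontal $(n-1)$-disk $\gamma$ inside the target open set, grow its inradius by Corollary \ref{diametro} (using the perturbed cone field from Corollary \ref{conorobusto}), invoke Remark \ref{doesnotmisspoints} to force an intersection with the vertical invariant circle $N$, and then use the fact that $0'$ is an attractor for $g_{|N}$ to push the intersection point into $\rho=W^s_{loc}(0')$. Where you diverge from the paper is in the handling of the degenerate case in which the intersection $g^{k_0}(\gamma)\cap N$ happens to consist only of the repelling fixed point of $g_{|N}$. The paper resolves this by routing through the \emph{second} invariant circle $N'$ and the saddle $\tfrac{2}{13}'$, using the overlapping-basin property of Remark \ref{cuencasencoordenadas} (this is the genuine blender mechanism: whatever point of $S^1$ you land at, at least one of the two circle maps contracts there). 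You instead avoid the degenerate point by sliding $\gamma$ slightly inside $V$, arguing that $g^{k_0}(\gamma)\cap N$ then moves continuously along $N$ past the single bad point.

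Both strategies are legitimate, and yours is shorter, but the perturbation step is the one place where you would need to say more. It is not automatic that sliding $\gamma$ in the $S^1$-direction actually moves the intersection point with $N$ (rather than leaving it fixed): this requires, for instance, observing that the derivative of $g$ in the $\vec{e}_n$-direction along the relevant orbit stays uniformly away from $0$ (which holds here because $\|D\hat f_2\|$ is bounded and bounded away from zero), or passing to a genericity/measure-zero argument over a one-parameter family of parallel disks. The paper's use of the $N,N'$ pair sidesteps this check entirely, and it is also the step that exhibits why the whole construction needs \emph{two} generators for the IFS rather than one; a clean write-up along your lines should at least flag why the sliding step is not circular and why it cannot be trapped at the repeller, before the argument can be considered complete.
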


\begin{proof}
  Let $U=U_1 \times U_2$ be any open set in $\mathbb{T}^{n-1} \times S^1$ and $\gamma$ a $(n-1)$-disk in $U$ such that $\gamma'$ belongs to $C^u_{\kappa}(\gamma)$ the cone field of $g$ at $\gamma$ at all times. Therefore by Corollary \ref{diametro}, for all $m \in \mathbb{N}$, $ir_{n-1}(g^{m}(\gamma))\geq 2^mir_{n-1}(\gamma)$. Since $ir_{n-1}(\mathbb{T}^{n-1}) \leq \sqrt{n}$ and $C^u_{\kappa}$ is 'horizontal', Remark \ref{doesnotmisspoints} ensures that both $g^{m_0}(\gamma) \cap N \neq \emptyset$ and $g^{m_0}(\gamma) \cap N' \neq \emptyset$.\\ To finish, if $0' \notin g^{m_0}(\gamma)$, since $0'$ is attracting for $g_{|N}$ then $g^{m_0+j}(\gamma) \cap W^s_{loc}(0') \neq \emptyset$ for some $j \in \mathbb{N}$. If, on the contrary, $0' \in g^{m_0}(\gamma)$ then $g^{m_0}(\gamma) \cap N' \neq \{ \frac{2}{13}' \}$ since $\kappa$ is small and there is a large distance between $0'$ and $\frac {2} {13}'$ (due to the geometry of $\mathcal{F}$ at Lemma \ref{SRM} together with the conjugations explained right after Theorem \ref{fisT}). In the latter case, since $\frac{2}{13}'$ is a sink for $g_{|N'}$ it holds that  $g^{m_0+j}(\gamma) \cap W^s_{loc}(\frac{2}{13}') \neq \emptyset$ and consequently by attraction of $0'$ for $g_{|N}$ then $g^{n_0+j+k}(\gamma) \cap \rho \neq \emptyset$. In either case, there exists $m \in \mathbb{N}$ such that  $g^{m}(\gamma) \cap \rho \neq \emptyset$ which means that $g^{-m}(\rho) \cap \gamma \neq \emptyset$ which gives the claim at the thesis since $\gamma \subset U$.
\end{proof}

\begin{thm}\label{fisRT}
  The map $f$ defined by Equation (\ref{mainmap}) is robustly transitive.
\end{thm}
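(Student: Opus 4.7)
My plan is to mirror the proof of Theorem \ref{fisT}, substituting Lemmas \ref{umdg} and \ref{smdg} for Lemmas \ref{umd} and \ref{smd}, and invoking the inclination lemma at the persistent saddle $0'$ of the perturbation $g$. All the hard work has already been done: the existence of the continuation $0'$ together with its local stable curve $W^s_{loc}(0')\subset N$ and its local unstable $(n-1)$-disk $W^u_{loc}(0')$ transverse to $N$ was set up in the paragraph preceding Lemma \ref{umdg}, and the two density statements for these local invariant sets are exactly the content of Lemmas \ref{umdg} and \ref{smdg}.

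First I would fix an arbitrary $g\in\mathcal{V}_f$ (the neighborhood obtained after all the reductions preceding Lemma \ref{gestirapatra}) and verify transitivity using the open-set characterization in Proposition \ref{equi}. Given open sets $U,V\subset\mathbb{T}^n$, I would pick a small $(n-1)$-disk $\gamma\subset U$ whose tangent spaces lie in the unstable cone field $C^u_\kappa$, which exists robustly by Corollary \ref{conorobusto}. Applying Lemma \ref{smdg} to the open set $U$ (or to an open subdisk of $\gamma$), I get an integer $m$ and a point $q\in U$ with $g^m(q)\in W^s_{loc}(0')$, where moreover the forward image $g^m(\gamma)$ is, by Corollary \ref{diametro}, large in the unstable directions and hence crosses $W^s_{loc}(0')$ transversally at $g^m(q)$.

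Next I would invoke the inclination ($\lambda$-)lemma at the hyperbolic saddle $0'$: since $g^m(\gamma)$ is a disk of dimension equal to that of $W^u_{loc}(0')$ transverse to $W^s_{loc}(0')$, the forward iterates $g^{m+\ell}(\gamma)$ accumulate in the $C^1$ topology on $W^u_{loc}(0')$ as $\ell\to\infty$. By Lemma \ref{umdg}, $W^u_{loc}(0')$ is forward $g$-dense in $\mathbb{T}^n$, so there exists $k$ with $g^k(W^u_{loc}(0'))\cap V\neq\emptyset$; combining this with the $C^1$ accumulation, $g^{m+\ell+k}(\gamma)\cap V\neq\emptyset$ for all sufficiently large $\ell$. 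Since $\gamma\subset U$, this gives $g^{n}(U)\cap V\neq\emptyset$ for $n=m+\ell+k$, which is exactly condition (2) of Proposition \ref{equi}, proving $g$ transitive and hence $f$ robustly transitive.

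The main obstacle I anticipate is technical rather than conceptual: justifying transversality of $g^m(\gamma)$ to $W^s_{loc}(0')$ in the perturbed setting. Because $N$ is only $C^1$ close to $\{0\}\times S^1$ but $W^s_{loc}(0')$ is a \emph{curve} inside $N$, one has to check that a disk satisfying the cone condition really cuts $W^s_{loc}(0')$ transversally, not merely $N$. This follows from Remark \ref{doesnotmisspoints} (together with the geometry of $\mathcal{F}$ from Lemma \ref{SRM}) which guarantees that the projection behaves almost as in the unperturbed case, so the horizontal/vertical picture persists and the transversality is open; with $\mathcal{V}_f$ taken small enough at the outset, no further estimate is needed.
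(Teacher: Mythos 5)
Your proposal is correct and takes essentially the same approach as the paper: the paper's proof of Theorem \ref{fisRT} is a two-line reduction that simply cites Lemmas \ref{umdg} and \ref{smdg} and then repeats the inclination argument of Theorem \ref{fisT} for the perturbed map $g$. You have merely unpacked the phrase ``by an inclination argument'' into an explicit $\lambda$-lemma step at the hyperbolic saddle $0'$, and you correctly flag the one technical point (transversality of the cone-field disk to the stable curve inside $N$) that the paper leaves implicit and that is indeed supplied by Remark \ref{doesnotmisspoints} together with the smallness of $\mathcal{V}_f$.
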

\begin{proof}
  Let $g \in \mathcal{V}_f$. Since $g$ satisfies Lemma \ref{umdg} and Lemma \ref{smdg}, then $g$ satisfies Theorem \ref{fisT}. Since $g$ is an arbitrary map in $\mathcal{V}_f$, then $f$ is $C^1$ robustly transitive.
\end{proof}

\section{A singular endomorphism $F$ of $\T^n$.}
 Now that we have defined a robustly transitive endomorphism $f$ given by a blending region contained in $K \times S^1$, we procceed to the second step of the construction by (robustly) artificially introducing critical points in the complement of $K^{\varepsilon} \times S^1$. The technique used to introduce the critical points is inspired by the construction carried on in \cite[Section 2.2]{ilp}. Once the surgery over $f$ is performed, the map $F$ announced at Theorem \ref{main} arises. \\

 As a short set of preliminaries, we provide first the definitions of the singularities of any map $h:M \rightarrow M$; recall that $M$ denotes a real manifold of dimension $m$.
 We say that $x \in M$ is a \textbf{critical point} or \textbf{singularity} for $h$ if the differential map at $x$, $D_x h$ is not surjective. Observe that $x$ is a singularity for $h$ if and only if the rank of the jacobian matrix satisfies $rk(D_x h) < m$, if and only if the determinant $det(D_xh) = 0$. The \textbf{critical set} of $h$ is $ S_h = \{ x \in M / rk(D_x h) < m \} $. We say that $h$ is a \textbf{singular endomorphism} if the critical set $S_h$ is non empty; and we say that $h$ is a \textbf{persistently singular endomorphism} if there exists a neighborhood $\mathcal{U}_h$ of $h$ in the $C^1$ topology such that all $ g \in \mathcal{U}_h$ satisfy $S_g \neq \emptyset$.

\subsection{Construction of $F$}\label{constructionofF}

{\textit{Sketch of the construction:} We choose a point not in $K^\varepsilon \times S^1$ and set a ball centered at this point, inside the complement of $K^\varepsilon \times S^1$. By means of standard surgical procedures, we perturb $f$ to introduce a set of critical points inside the ball and with the additional property that the resulting critical set is persistent. Since the surgery does not affect the blending region $K \times S^1$, the robust transitivity of the map $f$ defined by Equation (\ref{mainmap}) is inherited by the new map. We call the new map $F$, and it satisfies the claim at the title of the article. \\

\begin{figure}[ht]
\begin{center}
\subfigure[]{\includegraphics[scale=1]{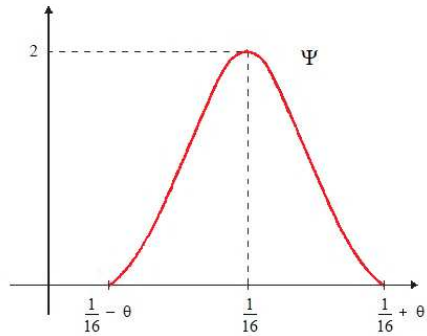}}
\subfigure[]{\includegraphics[scale=0.37]{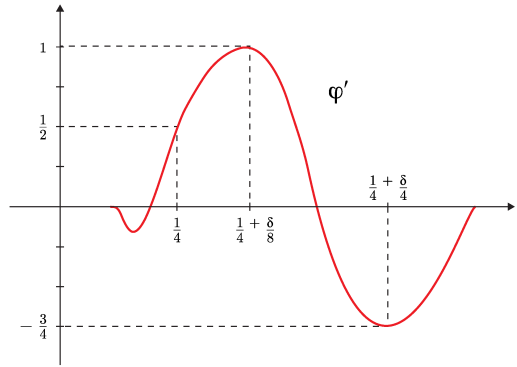}}
\caption{Graphs of $\psi$ and $\varphi'$ (taken from \cite{mo})}\label{figura11}
\end{center}
\end{figure}

Let $p=(\frac{1}{4},0,...,0,\frac{1}{4})\in\T^{n}$. Our goal is to define a ball of center $p$ to perform a perturbation in order to obtain the map $F$ we seek. To achieve this goal we need to fix a series of technical parameters; the choice to set all of them at the same time and at the beginning of the construction is in expectance of avoiding darkness and of that it will be clear how they depend on each other.\\

Start with $r>0$ satisfying that the ball $B_{(p,r)} \cap \left( K^\varepsilon \times S^1 \right)= \emptyset$, this is possible since $p \notin \left( K^\varepsilon \times S^1 \right)$. Fix a second parameter $\theta$ such that  $0< \theta  < \frac{r}{2}$  and define a smooth ($C^{\infty}$) function $\psi :\R\to\R$ with a unique critical point at $\frac{1}{16}$, with $\psi (\frac{1}{16})=2$ and $\psi (x)=0$ for all $ x$ in the complement of $(\frac{1}{16}-\theta ,\frac{1}{16}+\theta  )$;  and an axis of symmetry in the line $x=\frac{1}{16}$ as shown in Figure \ref{figura11} (a) .

Set finally a last parameter $\delta $, with  $0<\delta < 2\theta $ verifying the following condition: since the derivative of $\psi$ is bounded once $\theta$ has been fixed, name the bound as $ m_\psi := m_\psi(\theta) = max  \{ | \psi'(x) |, x \in \R \} $ and impose on $\delta$ that $2.m_\psi.r.\delta<11\kappa$.  \\

Having fixed $\delta$, consider another smooth function $\varphi:\R\to\R$ such that:

\begin{enumerate}
\item $\varphi' $ is as in Figure \ref{figura11} (b).
\item $\frac{-3}{4}\leq \varphi'(x)\leq 1$ for all $x \in \R$. This gives $| \varphi'(x)| \leq 1$ for all $x \in \R$.
\item $\varphi' (x)=0$ for all
$ x \notin [\frac{1}{4}-\frac{\delta}{4},\frac{1}{4}+\frac{3\delta}{4} ]$.
\item $\varphi' (\frac{1}{4})=\frac{1}{2}$, $\varphi' (\frac{1}{4} +\frac{\delta}{8}  )=1$, $\varphi' (\frac{1}{4} +\frac{\delta}{4}  )=-\frac{3}{4}$, $\varphi (\frac{1}{4})=0$.

\end{enumerate}
\begin{rk}\label{rkdelta}
  $max \{ |\varphi (x)|: \ x \in \R \}\leq \delta.$
\end{rk}

We are now in condition to define a perturbation of $f$ in the direction of the last canonical vector $\vec{e_n}$ that depends on $r$,$\theta$ and $\delta$ which by simplicity we call only $F$ and is defined at $x=(x_1,...,x_n)$ as \begin{equation}\label{mapaefe}
           F_{r,\theta, \delta}:\T^n\to \T^n / F(x)= \left\{\begin{array}{c}
                                                            f(x) \mbox{\ \ \ \ \ \ \ \ \ \ \ \ \ \ \ \ \ \ \ \ \ \ \ \ \ \ \ \ \ \ \ \ \ \ \ \ \   if $x \notin B_{(p,r)} $} \\
                                                            A(x) - \varphi(x_n).\psi\left( \sum_{j=1}^{n-1}x_j^2 \right).\vec{e_n} \mbox{\ \  if $x \in B_{(p,r)} $}
                                                          \end{array}\right. .
         \end{equation}

\begin{rk}\label{rkF}.

\begin{enumerate}
            \item For all $x \notin B_{\left(p,\frac{3\delta}{4}\right)}$ it holds that $F(x)=f(x)$.
            \item For all $x \notin K^{\varepsilon}\times \T^{n-1}$ it holds that $f(x)=A(x)$.
          \end{enumerate}

\end{rk}
To make the reading easier we will denote $\varphi(x_n)$ as $\varphi$ and $\psi \left( \sum_{j=1} ^{n-1} x_j^2 \right)$ as $\psi$ omitting the evaluations appearing on the definition.

\begin{lema}\label{Fps}

  The endomorphism $F$ defined by Equation (\ref{mapaefe}) is persistently singular.
\end{lema}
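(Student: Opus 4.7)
The plan is to compute $\det DF$ explicitly inside the perturbation ball $B_{(p,r)}$, observe that it vanishes at $p$ while taking strictly opposite signs at other nearby points, and then use that $G\mapsto\det DG$ is continuous from the $C^{1}$-topology to the $C^{0}$-topology to invoke the intermediate value theorem on any $C^{1}$-perturbation $G$ of $F$.

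Inside $B_{(p,r)}$ the map reads $F(x)=(14x_1,\dots,14x_{n-1},\,x_n-\varphi(x_n)\,\psi(\sum_{j<n} x_j^2))$. The first $n-1$ rows of the Jacobian $DF(x)$ form the matrix $14\,I_{n-1}$ augmented by a zero last column, so cofactor expansion along that column yields
\[
\det DF(x)\;=\;14^{n-1}\,\Bigl(1-\varphi'(x_n)\,\psi\bigl({\textstyle\sum_{j<n}x_j^2}\bigr)\Bigr).
\]
At $p=(\tfrac{1}{4},0,\dots,0,\tfrac{1}{4})$ we get $\sum_{j<n}x_j^2=\tfrac{1}{16}$, and by construction $\psi(\tfrac{1}{16})=2$ together with $\varphi'(\tfrac{1}{4})=\tfrac{1}{2}$, so $\det DF(p)=0$ and $p\in S_F$. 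Taking $p^{+}:=(\tfrac{1}{4},0,\dots,0,\tfrac{1}{4}+\tfrac{\delta}{8})$---which lies in $B_{(p,r)}$ because $\delta<2\theta<r$---the values $\varphi'(\tfrac{1}{4}+\tfrac{\delta}{8})=1$ and $\psi(\tfrac{1}{16})=2$ give $\det DF(p^{+})=-14^{n-1}<0$. For the opposite sign, choose $q:=(\tfrac{1}{4},0,\dots,0,\tfrac{1}{4}+\tfrac{3r}{4})\in B_{(p,r)}$: since $r>\delta$ the last coordinate $\tfrac{1}{4}+\tfrac{3r}{4}$ lies outside the support interval $[\tfrac{1}{4}-\tfrac{\delta}{4},\tfrac{1}{4}+\tfrac{3\delta}{4}]$ of $\varphi'$, so $\varphi'(q_n)=0$ and hence $\det DF(q)=14^{n-1}>0$.

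For persistence, note that the determinant is a polynomial in the entries of $DG$, so $G\mapsto\det DG$ is continuous from $C^{1}(\T^n,\T^n)$ to $C^{0}(\T^n,\R)$. One can therefore shrink a $C^{1}$-neighborhood $\mathcal{U}_F$ of $F$ so that every $G\in\mathcal{U}_F$ still satisfies $\det DG(p^{+})<0$ and $\det DG(q)>0$. Joining $p^{+}$ to $q$ by any continuous arc inside the path-connected open set $B_{(p,r)}$, the composition of $\det DG$ with this arc is a continuous real-valued function that changes sign, so the intermediate value theorem furnishes a zero of $\det DG$ along the arc, i.e.\ a critical point of $G$ in $B_{(p,r)}$. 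The calculation is otherwise routine; the only item requiring care is the parameter hierarchy $0<\delta<2\theta<r$ fixed during the construction, which is precisely what ensures simultaneously that $p^{+}$ sits in $B_{(p,r)}$ and in the support of $\varphi'\cdot\psi$, and that $q$ sits in $B_{(p,r)}$ but outside that support.
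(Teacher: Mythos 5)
Your proposal is correct and follows essentially the same route as the paper: compute $\det D_xF = 14^{n-1}(1-\varphi'\psi)$ inside $B_{(p,r)}$, exhibit two nearby points at which the determinant has opposite sign, and deduce persistence from the continuity of $G\mapsto\det DG$ in the $C^1$ topology together with the intermediate value theorem. The only cosmetic difference is the choice of the positive-determinant point: the paper takes $q_1=(\tfrac14,0,\dots,0,\tfrac14+\tfrac{\delta}{4})$, where $\varphi'=-\tfrac34$ gives $\det=\tfrac52\cdot14^{n-1}$, whereas you take a point outside the support of $\varphi'$, giving $\det=14^{n-1}$; both work.
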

\begin{proof}
Start computing the differential $D_x F$ at $x=(x_1,...,x_n) \in B_{(p,r)} $ to get

\begin{equation}\label{ecuacion1}
D_xF =
\left( \begin{array}{ccccc}
14 & 0 & \cdots & 0 & 0 \\
0 & 14 & \cdots & 0 & 0 \\
\vdots & \vdots & & \vdots & \vdots \\
0 & 0 & \cdots & 14 & 0 \\
-2.x_1.\varphi.\psi' & -2.x_2.\varphi.\psi'& \cdots & -2.x_{n-1}.\varphi.\psi' & 1-\varphi'.\psi \\
\end{array}
\right).
\end{equation}\\

Since the critical set of $F$ is defined as $S_F=\{x \in \T^{n} /det ( D_xF )=0\}$, Equation (\ref{ecuacion1}) provides $det(D_xF)=14^{n-1} \cdot (1-\varphi'.\psi) $. In turn, $S_F=\{x \in \T^{n} / 1-\varphi'.\psi =0\rbrace.$\\
Notice that $S_F$ is not empty since $p=(\frac{1}{4},0,...,0,\frac{1}{4}) \in S_F$. To prove that $S_F$ is persistent, consider the points $q_1=(\frac{1}{4},0,...,0, \frac{1}{4} + \frac{\delta}{4})$ and $q_2=(\frac{1}{4},0,...,0, \frac{1}{4} + \frac{\delta}{8})$ both in $B_{(p,r)}$. Evaluate determinants \textit{det}$(D_{q_1}F)=\frac 5 2 . 14 ^{n-1}$ and \textit{det}$(D_{q_2}F)= -14 ^{n-1}$. Therefore, for a neighborhood $\mathcal{U}_F \in C^{1}$ of radi $1$, every $ g \in \mathcal{U}_F$ satisfies $S_g\neq\emptyset$.

\end{proof}

\subsection{Dynamics of $F$.}
We turn now to the last part of the article where we show that $F$ is $C^1$ robustly transitive. To prove it, observe first that after Remark \ref{rkF}, Lemma \ref{umdg}  holds for $F$ automatically. If we prove that Lemma \ref{smdg} also holds for $F$, then we can apply the same reasoning of Theorem \ref{fisRT} to $F$ to have the result. Notice that for Lemma \ref{smdg} to hold for $F$ we only need to show that $F$ admits an unstable cone $C_{\kappa}^u(x)$ of index $1$ at every point $x \in B_{(p,r)}$ which satisfies that for all $(n-1)$-disk $\gamma$ with $\gamma' \in C_{\kappa}^u(\gamma)$ then $ir_{n-1}(F(\gamma))\geq 4ir_{n-1}(\gamma)$.

\begin{lema}\label{conoF1}
  For all $x \in B_{(p,r)}$ it holds that $C^{u}_{\kappa} (x)$ is an unstable cone for $F$.
\end{lema}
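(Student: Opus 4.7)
The plan is to mirror closely the argument used for Lemma \ref{conosf}, now using the explicit form of $D_xF$ at points in $B_{(p,r)}$ recorded in Equation (\ref{ecuacion1}).

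First, given $(v_1,v_2)\in C^u_\kappa(x)$ with $v_1\in \R^{n-1}$ and $v_2\in \R$ (so $\|v_2\|<\kappa\|v_1\|$), I would read off $D_xF(v_1,v_2)=(w_1,w_2)$ directly from (\ref{ecuacion1}):
\begin{equation*}
w_1 \;=\; 14\,v_1,\qquad w_2 \;=\; -2\,\varphi\,\psi'\,\langle \tilde{x},v_1\rangle \;+\; (1-\varphi'\psi)\,v_2,
\end{equation*}
where $\tilde{x}:=(x_1,\ldots,x_{n-1})$ and the $n-1$ identical off-diagonal entries of the last row of $D_xF$ have been bundled into a single inner product.

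Second, I would bound the ratio $\|w_2\|/\|w_1\|=\|w_2\|/(14\|v_1\|)$ from above by combining: the triangle inequality; the Cauchy--Schwarz bound $|\langle\tilde{x},v_1\rangle|\leq \|\tilde{x}\|\,\|v_1\|$; the geometric estimate $\|\tilde{x}\|\leq r$, which applies whenever the off-diagonal term is non-zero since $\psi'$ is supported near $\tfrac{1}{16}$ and $x\in B_{(p,r)}$; Remark \ref{rkdelta} giving $|\varphi|\leq \delta$; the definition of $m_\psi$ giving $|\psi'|\leq m_\psi$; and the constructional bounds $|\varphi'|\leq 1$ and $|\psi|\leq 2$, from which $|1-\varphi'\psi|\leq 3$. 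The outcome is
\begin{equation*}
\frac{\|w_2\|}{\|w_1\|}\;\leq\;\frac{2\,m_\psi\,r\,\delta}{14}\;+\;\frac{3}{14}\cdot\frac{\|v_2\|}{\|v_1\|}\;<\;\frac{2\,m_\psi\,r\,\delta}{14}\;+\;\frac{3\,\kappa}{14}.
\end{equation*}

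Third, I would invoke the tailored inequality $2\,m_\psi\,r\,\delta<11\kappa$ imposed on $\delta$ during the construction of $F$ to conclude
\begin{equation*}
\frac{\|w_2\|}{\|w_1\|}\;<\;\frac{11\kappa}{14}+\frac{3\kappa}{14}\;=\;\kappa,
\end{equation*}
which is exactly the statement that $(w_1,w_2)\in C^u_\kappa(F(x))$, establishing that $C^u_\kappa(x)$ is an unstable cone for $F$ at every $x\in B_{(p,r)}$.

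The main obstacle is merely careful constant-chasing: the off-diagonal block in $D_xF$ must be collapsed correctly via Cauchy--Schwarz, and the five ingredients $|\varphi|$, $|\psi'|$, $|\varphi'|$, $|\psi|$ and $\|\tilde{x}\|$ must fit into the threshold $11\kappa$ that the construction reserves, leaving exactly the slack $\tfrac{3\kappa}{14}$ needed to absorb the $(1-\varphi'\psi)\,v_2$ term while keeping the total strictly below $\kappa$.
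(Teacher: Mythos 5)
Your proposal mirrors the paper's proof of Lemma \ref{conoF1} essentially line by line: the same reading of $D_xF$ from Equation (\ref{ecuacion1}) into $w_1=14v_1$ and $w_2=-2\varphi\psi'\langle\tilde{x},v_1\rangle+(1-\varphi'\psi)v_2$, the same triangle inequality plus Cauchy--Schwarz, the same ingredients $|\varphi|\le\delta$, $|\psi'|\le m_\psi$, $|1-\varphi'\psi|\le 3$, and the same invocation of $2m_\psi r\delta<11\kappa$ to close the estimate. One small caveat that the paper itself shares: the intermediate bound $\|\tilde{x}\|\le r$ is not literally correct as stated (the ball $B_{(p,r)}$ is centered at $p$ with $\tilde{p}=(\tfrac14,0,\dots,0)$, and when $\psi'\ne0$ one actually has $\|\tilde{x}\|^2$ close to $\tfrac{1}{16}$, so $\|\tilde{x}\|$ is close to $\tfrac14$ rather than bounded by $r$); this is harmless because $\delta$ is chosen last and can be shrunk to compensate, but the justification you give via the support of $\psi'$ produces a bound near $\tfrac14$, not $r$.
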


We adopt the following notation for the proof, $\tilde{v}:=(v_{1},v_{2},...,v_{n-1})$ whenever $v=(v_{1},v_{2},...,v_{n})$.

\begin{proof}
From Equation (\ref{ecuacion1}) we have for all $ v=(v_{1},v_{2},...,v_{n}) \in C^{u}_{\kappa} (x):$
$$D_{x}F (v) = (14v_{1},14v_{2},...,14v_{n-1},-2.{\langle \tilde{x},\tilde{v}\rangle}.\varphi.\psi' + v_{n}.(1-\varphi'.\psi)). $$

Call $ u:=D_{x}F (v)=(u_1,..,u_n)$ and perform calculations, we have:  $$ \frac{\vert u_{n}\vert}{\Vert \tilde{u} \Vert}  = \frac{\vert -2.{\langle \tilde{x},\tilde{v}\rangle}.\varphi .\psi'+ v_{n}.(1-\varphi' .\psi) \vert}{\Vert  14\tilde{v} \Vert} \leq  \frac{2.\Vert \tilde{x} \Vert.\Vert \tilde{v} \Vert.\vert \varphi \vert.\vert \psi' \vert}{14\Vert \tilde{v}\Vert} + \frac{\vert 1-\varphi' .\psi \vert. \vert v_{n}\vert}{14\Vert \tilde{v} \Vert} < \frac{2.r.\delta.m_\psi}{14} +\frac{3\kappa}{14} < \kappa. $$ \\ Above, for the first inequality we use triangular and Cauchy-Schwarz; for the second one we use:
\begin{enumerate}
\item $ v \in C^{u}_{\kappa} (x)$,
\item $\Vert \tilde{x} \Vert \leq \Vert x \Vert < r$,
\item $ \vert \varphi \vert \leq \delta $ by Remark \ref{rkdelta},
\item $ \vert \psi' \vert < m_\psi $,
\item $ \vert 2-\varphi' .\psi \vert \leq 3 $ since $ \frac{-3}{4} \leq \varphi' \leq 1 $ and $ 0 \leq \psi \leq 2$.

\end{enumerate}
And for the third one we use the condition $2.m_\psi.r.\delta<11\kappa$ imposed over $\delta$.
\end{proof}
\begin{lema}\label{conoF2}
  For all $x \in B_{(p,r)}$ and all $ v \in C^{u}_{\kappa} (x)$ it holds that $ \Vert D_{x}F (v) \Vert > 4 \Vert v \Vert $.
\end{lema}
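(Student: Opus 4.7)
The plan is to mimic the proof of Lemma \ref{estiraf} essentially verbatim, because the last coordinate of $D_xF(v)$ can simply be discarded when bounding $\|D_xF(v)\|$ from below. The extra term $-2\langle\tilde{x},\tilde{v}\rangle\varphi\psi' + v_n(1-\varphi'\psi)$ only helps; it does not need to be controlled for a lower bound.

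First I would write, from Equation (\ref{ecuacion1}),
\[
\|D_xF(v)\|^2 \geq (14v_1)^2 + \cdots + (14v_{n-1})^2 = 196\|\tilde{v}\|^2.
\]
Then I would use the cone condition $v\in C^u_\kappa(x)$, which means $\|(v_n)\|/\|\tilde{v}\|<\kappa$, to bound the denominator:
\[
\|v\|^2 = \|\tilde{v}\|^2 + v_n^2 < (1+\kappa^2)\|\tilde{v}\|^2.
\]

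The key computation is then
\[
\left(\frac{\|D_xF(v)\|}{4\|v\|}\right)^2 \geq \frac{196\|\tilde{v}\|^2}{16(1+\kappa^2)\|\tilde{v}\|^2} = \frac{196}{16(1+\kappa^2)} > \frac{196}{160} > 1,
\]
where the strict inequality uses $\kappa<3$ so that $1+\kappa^2<10$. Taking square roots yields $\|D_xF(v)\|>4\|v\|$, as required.

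There is essentially no obstacle here: the lemma is a direct structural consequence of the fact that the upper-left $(n-1)\times(n-1)$ block of $D_xF$ is $14\cdot Id$, independent of the perturbation introduced inside $B_{(p,r)}$. The only subtlety is to notice that, unlike in Lemma \ref{conoF1} where the last row of $D_xF$ had to be carefully estimated (and the condition $2m_\psi r\delta<11\kappa$ was used), here the last row is irrelevant because we only need a lower bound on $\|D_xF(v)\|$, and dropping the last coordinate gives us that for free from the expansion in the first factor.
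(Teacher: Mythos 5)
Your proof is correct and matches the paper's argument essentially verbatim: both discard the last coordinate to bound $\|D_xF(v)\|$ below by $14\|\tilde{v}\|$, use the cone condition and $\kappa<3$ to get $\|v\|^2<(1+\kappa^2)\|\tilde{v}\|^2<10\|\tilde{v}\|^2$, and conclude $196/160>1$. Your added commentary about why the last row is irrelevant for a lower bound is a nice clarification, but the underlying computation is the same.
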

\begin{proof}
  Let $\tilde{v} \in \R^{n-1}$ and $v_n \in \R$ such that $ v=(\tilde{v},v_n) \in C^{u}_{\kappa} (x) \subset T_x \T^n$. Recall that $0<\kappa<3$ and compute: $$\left( \frac{\Vert D_{(x,y)}F (\tilde{v},v_n) \Vert}{4.\Vert (\tilde{v},v_n) \Vert}\right) ^{2} \geq \frac{\Vert 14\tilde{v}\Vert^{2}}{16.(\Vert \tilde{v}\Vert^{2}+|v_n|^2)}   \geq  \frac{196}{16.\left(1+\frac{|v_n|^2}{||\tilde{v}||^2}\right)} > \frac{196 }{160}>  1. $$ \end{proof}

\begin{lema}\label{Frt}
  The map $F$ defined by Equation (\ref{mapaefe}) is $C^1$ robustly transitive.
\end{lema}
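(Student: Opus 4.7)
The plan is to mirror the proof of Theorem \ref{fisRT} verbatim, with $F$ replacing $f$. Concretely, I want to show that every $g$ in a small $C^1$ neighborhood $\mathcal{V}_F$ of $F$ satisfies the analogues of Lemma \ref{umdg} and Lemma \ref{smdg}, and then apply an inclination--type argument as in Theorem \ref{fisT} to conclude that $g$ is transitive.

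First I would assemble a global unstable cone field for $F$. On $B_{(p,r)}$ this is given by Lemmas \ref{conoF1} and \ref{conoF2}. Outside $B_{(p,r)}$ we have $F=f$ by construction, so Lemmas \ref{conosf} and \ref{estiraf} apply directly. Thus $C^u_\kappa$ is an unstable cone field of index $1$ on all of $\T^n$ for $F$, and vectors in it are expanded by a factor strictly greater than $4$ under $DF$. By the same openness argument behind Corollary \ref{conorobusto}, there is a $C^1$ neighborhood $\mathcal{V}_F$ of $F$ in which every map inherits such a cone field, so that Corollary \ref{diametro} remains valid (with expansion factor slightly worse than $4$ but still $>2$, which is all that Lemma \ref{smdg} uses).

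Next I would exploit the crucial geometric fact that $B_{(p,r)} \cap (K^{\varepsilon} \times S^1)=\emptyset$. Because the surgery producing $F$ is supported in $B_{(p,r)}$, every ingredient used in Section~3 that takes place inside or adjacent to the blending region is untouched: items (1)--(6) of Remark \ref{dynF} still hold for $F$; the saddle fixed points $0'$ and $\tfrac{2}{13}'$ persist along with their normally hyperbolic circles $N,N'$ guaranteed by Theorem \ref{NHSP}; Remarks \ref{doesnotmisspoints} and \ref{cuencasencoordenadas} and Lemma \ref{gestirapatra} all transcribe to $F$ since their proofs only use the dynamics on $K \times S^1$ together with the Morse--Smale conjugacies of $g_1,g_2$. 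After shrinking $\mathcal{V}_F$ if necessary, the same is true for every $g\in\mathcal{V}_F$.

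With these two ingredients in hand, the proof of Lemma \ref{umdg} carries over unchanged: it uses only the saddle structure at $0'$, the blending preimage--spreading from Lemma \ref{gestirapatra}, expansion in the first factor, and the transitivity transfer through $N'$ afforded by Remark \ref{doesnotmisspoints}. Likewise the proof of Lemma \ref{smdg} carries over, since its only dynamical input beyond the blending/saddle structure is the inradius estimate $ir_{n-1}(g^m(\gamma))\ge 2^m ir_{n-1}(\gamma)$ for $(n-1)$-disks tangent to $C^u_\kappa$, which we obtained in the previous paragraph. Combining the two we find an open neighborhood of $0'$ that is both forward and backward $g$-dense, and the inclination argument of Theorem \ref{fisT} yields that $g$ is transitive. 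Since $g \in \mathcal{V}_F$ was arbitrary, $F$ is $C^1$ robustly transitive.

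I expect no genuine obstacle; the real content is the bookkeeping check that the surgical ball sits in the complement of the blending region and that Lemmas \ref{conoF1}--\ref{conoF2} let the cone-field half of the argument go through globally. If any step were delicate it would be the claim that the normally hyperbolic circles $N,N'$ produced for $F$ are $C^0$-close enough to $\{0\}\times S^1$ and $\{\tfrac{2}{13}\}\times S^1$ to preserve Remark \ref{doesnotmisspoints}, but this is exactly what Theorem \ref{NHSP} furnishes after a final reduction of $\mathcal{V}_F$.
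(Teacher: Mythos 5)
Your proposal is correct and follows the same route as the paper: the surgery is supported in $B_{(p,r)}$ which is disjoint from $K^\varepsilon\times S^1$, so Lemma \ref{umdg} transfers automatically, and Lemmas \ref{conoF1}--\ref{conoF2} supply the cone-field/inradius estimate needed to transfer Lemma \ref{smdg}, after which Theorem \ref{fisRT} applies. The paper states this more tersely, but your step-by-step bookkeeping is exactly the content being compressed there.
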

\begin{proof} From Lemmas \ref{conoF1} and \ref{conoF2} we conclude that Lemma \ref{smdg} holds for $F$. It was already mentioned that Lemma \ref{umdg} holds for $F$. Consequently, Theorem \ref{fisRT} holds for $F$. \end{proof}

We are now in condition to give the proof to the main Theorem of the article:

\begin{proof}[ \textbf{Proof of Theorem \ref{main}}]
Define $\mathcal{U}_1 \in C^1$ an open neighborhood of $F$ where Lemma \ref{Fps} holds and $\mathcal{U}_2 \in C^1$ an open neighborhood of $F$ where Lemma \ref{Frt} holds. Then, all maps belonging to $\mathcal{U}_F=\mathcal{U}_1 \cap \mathcal{U}_2$ are $C^1$ robustly transitive and have nonempty critical set.
\end{proof}

\section{Final Remarks} The example exhibited in this article shows the existence of $C^1$ robustly transitive maps displaying critical points on any dimension. Yet, many open questions remain: \textit{Is $\T^n$ the only high dimensional manifold supporting such a map? Is it possible to extend this type of construction to other quotient manifolds? Would it be possible to carry on the proof starting from a matrix whose linear induced map belongs to a different isotopy class? Is it possible that a fiber bundle (in stead of a product) admits a construction of this type?} just to mention some of them.

\section*{Acknowledgements} The author would like to give thanks to Prof. Jorge Iglesias for fruitful conversations regarding this problem and to Prof. Roberto Markarian and Prof. Lorenzo J. D\'iaz for their generous attitude towards the author and his work. Finally, the author wants to thank Prof. Rafael Potrie for his reading and comments regarding the construction which helped improving the outcome of the article.

\end{document}